\definecolor{ttttff}{rgb}{0.2,0.2,1.}
\definecolor{ttffcc}{rgb}{0.2,1.,0.8}
\definecolor{qqqqff}{rgb}{0.,0.,1.}
\definecolor{zzttqq}{rgb}{0.6,0.2,0.}
\definecolor{qqqqff}{rgb}{0.,0.,1.}
\newcommand\mstrut{{\phantom{.}}}
\def\acong{\mathrel{\mathpalette\@avereq\sim}} % isomorphic sign
\def\@avereq#1#2{\lower.5\p@\vbox{\baselineskip\z@skip\lineskip-.5\p@
    \ialign{$\m@th#1\hfil##\hfil$\crcr#2\crcr\longrightarrow\crcr}}}
\def\mequal{\mathrel{\mathpalette\@mvereq{\hbox{\tiny m}}}}
\def\@mvereq#1#2{\lower.5\p@\vbox{\baselineskip\z@skip\lineskip1.5\p@
    \ialign{$\m@th#1\hfil##\hfil$\crcr#2\crcr=\crcr}}}
\newcommand\rightinj{\lhook\joinrel\rightarrow}
\newtheorem{theorem}{Theorem}[section]
\newtheorem{corollary}[theorem]{Corollary}
\newtheorem{proposition}[theorem]{Proposition}
\theoremstyle{definition}
\newtheorem{remark}[theorem]{Remark}
\newtheorem{example}[theorem]{Example}
\numberwithin{equation}{section}
\newcommand\E{\mathbb{E}}\newcommand\M{\mathbb{M}}
\newcommand\N{\mathbb{N}}\newcommand\R{\mathbb{R}}
\newcommand\scrF{\mathscr F}
\newcommand\sH{\mathcal H}
\def\tr{{\operatorname{tr}}}
\def\vol{{\operatorname{vol}}}
\newcommand\1{\hbox{\kern.375em\vrule height1.57ex depth-.1ex
width.05em\kern-.375em \rm 1}}
\def\Ric{{\operatorname{Ric}}} \def\Aut{{\operatorname{Aut}}}
 \def\Hess{{\operatorname{Hess}}}
\def\mathpal#1{\mathop{\mathchoice{\text{\rm #1}}%
    {\text{\rm #1}}{\text{\rm #1}}%
    {\text{\rm #1}}}\nolimits}
 \def\id{{\mathpal{id}}}
\def\bolddot{{\displaystyle\boldsymbol{.}}}
\newcommand\newdot{{\kern.8pt\cdot\kern.8pt}}
\def\partr#1#2{/\!/_{\!#1,#2}^{\phantom{.}}}
\def\partrinv#1#2{/\!/_{\!#1,#2}^{-1}}
\def\r{\right}
\def\l{\left}
\def\e{\operatorname{e}}
\def\mathpal#1{\mathop{\mathchoice{\text{\rm #1}}%
      {\text{\rm #1}}{\text{\rm #1}}%
      {\text{\rm #1}}}\nolimits}
      \def\id{{\mathpal{id}}}
\def\<{\langle}
\def\>{\rangle}
\def\1{\mathds{1}}
\begin{document}
 %%%%%%%%%%%%%%%%%%%%%%%%%%%%%%%%%%%%%%%%%%%%%%%%%%%%%%%%%%%%%%%%%%%%%%%

 \title[Dimension-free Harnack inequalities for conjugate heat
 equations] {Dimension-free Harnack inequalities for conjugate heat
   equations and their applications to geometric flows}

 \author[L.-J.~Cheng]{Li-Juan Cheng$^*$} \thanks{* Corresponding
   author} \address{Department of Applied Mathematics, Zhejiang
   University of Technology, Hangzhou 310023, China}
 \email{chenglj@zjut.edu.cn}
 % \curraddr{Department of Mathematics, University of Luxembourg,
 % L--4364 Esch-sur-Alzette, Luxembourg} \email{chenglj@zjut.edu.cn
 % \text{\rm and} lijuan.cheng@uni.lu }
 \author[A. Thalmaier]{Anton Thalmaier} \address{Department of
   Mathematics, University of Luxembourg, L--4364 Esch-sur-Alzette,
   Luxembourg} \email{anton.thalmaier@uni.lu}

 \keywords{Heat kernel, log-Sobolev inequality, Harnack inequality,
   Bismut, Feynman-Kac} \subjclass[2010]{Primary: 35K08; Secondary:
   46E35, 42B35, 35J15} \date{\today}

 \begin{abstract}
   Let $M$ be a differentiable manifold endowed with a family of
   complete Riemannian metrics $g(t)$ evolving under a geometric flow
   over the time interval $[0,T[$.  In this article, we give a
   probabilistic representation for the derivative of the
   corresponding conjugate semigroup on $M$ which is generated by a
   Schr\"{o}dinger type operator.  With the help of this derivative
   formula, we derive fundamental Harnack type inequalities in the
   setting of evolving Riemannian manifolds.  In particular, we
   establish a dimension-free Harnack inequality and show how it can
   be used to achieve heat kernel upper bounds in the setting of
   moving metrics. Moreover, by means of the supercontractivity of the
   conjugate semigroup, we obtain a family of canonical log-Sobolev
   inequalities.  We discuss and apply these results both in the case
   of the so-called modified Ricci flow and in the case of general
   geometric flows.
 \end{abstract}

 \maketitle

\section{Introduction}\label{introduction-section}
Let $M$ be a differentiable manifold endowed with a $C^1$ family of
complete Riemannian metrics $g(t)$ indexed by the real interval
$[0,T[$ where $T\in{]0,\infty]}$. The family $g(t)$ describes the
evolution of the manifold $M$ under a geometric flow where $T$ is the
first time where possibly a blow-up of the curvature occurs.  This
type of singularities is not excluded in our setting.

More specifically, we consider geometric flows of the following type:
\begin{align*}
  \partial_tg(t)=-2h(t),\quad \text{on }M\times [0,T[,
\end{align*}
where $h(t)$ is a general time-dependent symmetric $(0,2)$-tensor.
For fixed $t$, with respect to the metric $g(t)$, let
$\sH_t=\tr_{g(t)}h(t)$ be the metric trace of the tensor $h(t)$ and
$\Delta_t$ the Laplace-Beltrami operator acting on functions on
$M$. In practice, the geometric flow deforms the geometry of $M$ and
smoothens out irregularities in the metric to give it a nicer and more
symmetric form which provides geometric and topological information on
the manifold.

Consider operators of the form $L_t=\Delta_t-\nabla^t\phi_t$ where
$\phi_t$ is a time-dependent function on $M$.  We also use the
notation $g_t=g(t)$ and $h_t=h(t)$. In this paper we study the
(minimal) fundamental solution to heat equations of the type
\begin{align*}
  (L_t-\partial_t)u(t,x)=0,\quad \text{resp.}\quad (L_t+\partial_t-\varrho_t)u(t,x)=0,
\end{align*}
where $\varrho_t=\partial_t\phi_t+\sH_t$. The first equation is the
classical heat equation, the second one appears naturally as conjugate
heat equation.  More precisely, we have the following relationship.

\begin{remark} Set $d\mu_t=\e^{-\phi_t}d\vol_t$ where $\vol_t$ denotes
  the Riemannian volume to the metric $g(t)$.  Let
  $\Box=L_t-\partial_t$ be the standard heat operator and $\Box^*$ its
  formal adjoint with respect to the measure $\mu_t\otimes dt$. Thus,
  \begin{align*}
    \int_0^{T}\int_MV\Box U\, d \mu_t \,dt=\int_0^{T}\int_M U\Box^* V\, d \mu_t \,dt
  \end{align*}
  for smooth functions $U, V\colon M\times {[0,T[}\to {[0,\infty[}$.
  From this relation it is immediate that
  $\Box^*=L_t+\partial_t-\varrho_t$.
\end{remark}

\begin{example}
  A typical situation covered by this setting is solving a geometric
  flow equation (e.g. Ricci flow) forward in time and the associated
  conjugate heat equation backward in time. In the case of the Ricci
  flow $\partial_tg(t)=-2\Ric_t$ and $L_t=\Delta_t$, the conjugate
  heat equation reads as $\Box^*u=(\Delta_t+\partial_t-R)u=0$ where
  $R=\tr\,\Ric$ denotes the (time-dependent) scalar curvature.
\end{example}

It should be mentioned that an important ingredient in the proof of
the Poincar\'{e} conjecture by Perelman is a differential Harnack
inequality which is related to a gradient estimate for solutions to
the conjugate heat equation under forward Ricci flow on a compact
manifold \cite{Pe12}.  This relation has been one of our motivations
to investigate solutions to conjugate heat equations and their
associated semigroups also by methods of stochastic analysis.

Let $X_t$ be the diffusion process generated by
$L_t=\Delta_t-\nabla^t\phi_t$ (called $L_t$-diffusion process) which
we assume to be non-explosive up to time $T$. We consider the
two-parameter semigroup associated to~$L_t$,
\begin{align*}
  P_{s,t}f(x):=\E\l[f(X_t)\,|\,X_s=x\r],\quad s\leq t,
\end{align*}
which satisfies the heat equation
\begin{align*}
  \left\{
  \begin{aligned}
    &\frac{\partial }{\partial s}P_{s,t}f=-L_sP_{s,t}f, \\
    &\lim_{s\rightarrow t}P_{s,t}f=f.
  \end{aligned}
      \right.
\end{align*}

In previous work, we already studied properties of heat equations
under geometric flows, like properties of the semigroup $P_{s,t}$
generated by $L_t$, by adopting probabilistic methods. In \cite{Ch17},
for instance, the first author gave functional inequalities equivalent
to a lower bound of $\Ric_t-\frac{1}{2}\partial_tg_t$.  In
\cite{Cheng-Thalmaier_JGA:2018,Cheng-Thalmaier_JFA:2018} we
established characterizations of upper and lower bounds for
$\Ric_t-\frac{1}{2}\partial_tg_t$ in terms of functional inequalities
on the path space over $M$.

On a more probabilistic side, in \cite{ChT18} the authors studied
existence and uniqueness of so-called evolution systems of measures
related to the semigroup $P_{s,t}$. Using such systems as reference
measures, contractivity of the semigroup, as well as log-Sobolev
inequality, have been investigated.

Although the evolution system of measures is helpful to shed light on
properties of solutions to the heat equation, it is still difficult to
obtain a full picture of this measure system, like its relation to the
system of volume measures. It has been observed that if one uses
volume measures as reference measures, many questions will be related
to the conjugate heat equation and not the usual heat equation, see
e.g. \cite{Ab15,CGT15}.

Recall that $\mu_t(dx)=\e^{-\phi_t(x)}d\vol_t$ where $\vol_t$ is the
volume measure with respect to the metric $g(t)$.  Let
\begin{align*}
  P_{s,t}^{\varrho}f(x)=\E\l[\exp{\l(-\int_s^t\varrho(r,X_r)\, dr\r)}\,f(X_t)\,\Big|\,X_s=x\r],
\end{align*}
where $\varrho(t,x)=\varrho_t(x)$ is given by
\begin{align*}
  \frac{\partial}{\partial t} \mu_t(dx)=-\l(\partial_t\phi_t+\mathcal{H}\r)(t,x)\,\mu_t(dx)
  =-\varrho(t,x)\,\mu_t(dx).
\end{align*}
According to the Feynman-Kac formula, $P_{s,t}^{\varrho}f$ represents
the solution to the equation
\begin{align*}
  \frac{\partial}{\partial s}\varphi_s=-(L_s-\varrho_s)\varphi_s,\quad \varphi_t=f,
\end{align*}
on $[0,t]\times M$ where $t<T$. We note that this equation is
conjugate to the heat equation
\begin{align*}
  \frac{\partial}{\partial s}u(s,x)=L_su(s,\cdot)(x).
\end{align*}

In this paper, we first give probabilistic formulae and estimates for
$dP_{s,t}^\varrho f$ from where we then derive a dimension-free
Harnack inequality. It is interesting to note that by combining the
dimension-free Harnack inequalities for $P_{s,t}$ and
$P_{s,t}^\varrho$, one can obtain new upper bounds for the heat kernel
to $L_t$ with respect to $\mu_t$, see
Theorem~\ref{Heat-kernel-estimate}.  We apply this result then (see
Corollary \ref{cor1} below) to the following modified geometric flow
for $g_t$ combined with the conjugate heat equation for $\phi_t$,
i.e.,
\begin{align}\label{modified-geoflow}
  \left\{
  \begin{aligned}
    \partial_tg_t&=-2(h+\Hess(\phi))_t; \\
    \partial_t \phi_t&=-\Delta_t\phi_t-\sH_t.
  \end{aligned}
                       \right.
\end{align}
As is well-known \cite{Pe12}, when $h_t=\Ric_t$, this flow gives the
gradient flow to Perelman's entropy functional $\scrF$.

We observe that
\begin{align}\label{Eq:mut}
  \mu_s(P_{s,t}^{\varrho}f)=\mu_t(f)
\end{align}
which means that the family of measures $\mu_s$ plays for the
semigroup $P_{s,t}^{\varrho}$ a similar role as the invariant measure
for the one-parameter semigroup $P_t$ on the static Riemannian case.
Log-Sobolev inequalities with respect to the invariant measure are
well established under certain curvature conditions on static
Riemannian manifolds; they are related to other functional
inequalities for $P_t$ and have many applications, see for instance
\cite{Bakry,Gross, Wang01,Wang09}.  This leads to the natural question
whether one can prove log-Sobolev inequalities with respect to $\mu_t$
in a similar way through functional inequalities for
$P_{s,t}^{\varrho}$. In Section \ref{log-sobolev-section}, we discuss
the relation between contraction properties of the semigroup and
log-Sobolev inequalities with respect to $\mu_t$. Using the
dimension-free Harnack inequality for $P_{s,t}^\varrho$, we give a
sufficient condition for supercontractivity of $P_{s,t}^\varrho$ which
we then use to establish existence of the (defective) log-Sobolev
inequality for $\mu_s$.  It is well-known that the log-Sobolev
inequality or Sobolev inequality is an important tool to prove an
upper bound of the heat kernel, see \cite{Ab16,B12, Zh12}.

We prove the following result about system
\eqref{modified-geoflow}. Denote by $\rho_t\equiv\rho_t(o,\newdot)$
the distance function to a given base point $o$ in $M$ with respect to
the metric $g_t$.  Suppose that the geometric flow $g_t$ and and the
function $\phi_t$ satisfy \eqref{modified-geoflow}. Furthermore,
assume that $\Ric_t+\Hess_t(\phi_t)-\frac{1}{2}\partial_tg_t\geq K(t)$
and $\mu_t\big(\exp(\lambda \rho_t^2)\big)<\infty$ for all $\lambda>0$
and $t\in [0,T[$, then there exists a function $\beta$ such that
\begin{align*}
  \mu_s(f^2\log f^2)\leq r\mu_s(|\nabla^s f|^2_s)+\beta_s(r),\quad r>0,
\end{align*}
for $f\in C^{\infty}_0({[0,T[}\times M)$ and $\mu_s(f^2)=1$.

The remaining part of the paper is organized as follows. In Section
\ref{derivative-formula}, a probabilistic formula for the derivative
of the conjugate heat semigroup is given and used to derive a gradient
estimate for $P_{s,t}^{\varrho}$ under suitable curvature
conditions. In Section \ref{Harnack-ineq-section}, we derive two
versions of dimension-free Harnack inequalities from the mentioned
gradient inequality for $P_{s,t}^{\varrho}$, which are then applied in
Section \ref{generalflow-section} to gain a sufficient and necessary
condition for supercontractivity of~$P_{s,t}^{\varrho}$. In this
section we also clarify the relation between the supercontractivity of
$P_{s,t}^{\varrho}$ and the log-Sobolev inequality with respect to
$\mu_t$. These results are applied to system \eqref{modified-geoflow}
of the modified geometric flow under conjugate heat equation.

\section{Brownian motion with respect to evolving manifolds}\label{Sect1}\noindent
Let $(M,g_t)_{t\in I}$ be an evolving manifold indexed by
$I=[0,T[$. Let $\nabla^t$ be the Levi-Civita connection with respect
to $g_t$. Denote by $\M:=M\times I$ space-time and consider the bundle
$$TM\xrightarrow{\pi} \M$$ where $\pi$ is the projection.
As observed by Hamilton~\cite{Hamilton:93} there exists a natural
space-time connection $\nabla$ on $TM$ considered as bundle over
space-time $\M$ such that
$$\left\{
\begin{aligned}
  \nabla_vX&=\nabla^t_vX,\quad\text{and}\\
  \nabla_{\partial_t}X&=\partial_tX+\frac12(\partial_tg_t)(X,\newdot)^{\sharp
    g_t}
\end{aligned}
\right.
$$
for all $v\in (T_xM,g_t)$ and all time-dependent vector fields $X$ on
$M$.  This connection is compatible with the metric, i.e.
$$\frac d{dt}|X|_{g_t}^2=2\langle
X,\nabla_{\partial_t}X\rangle_{g_t}.$$

\begin{remark}
  Let $G=\text{O}(n)$ where $n=\dim M$ and consider the $G$-principal
  bundle $\mathcal{F}\xrightarrow{\pi} \M$ of orthonormal frames with
  fibres
$$\mathcal{F}_{(x,t)}=\left\{u\colon \R^n\to(T_xM,g_t)\mid u\text{ isometry}\right\}.$$
As usual, $a\in G$ acts on $\mathcal{F}$ from the right via
composition.  The connection $\nabla$ gives rise to a $G$-invariant
splitting of the sequence \newcommand{\new}[5]{\begin{center}
    \begin{tikzpicture}[->,>=stealth',shorten >=1pt,auto,node
      distance=2cm,thick]
      \node (1) {$#1$}; \node (2) [right of=1] {$#2$}; \node (3)
      [right of=2] {$#3$}; \node (4) [right of=3] {$#4$}; \node (5)
      [right of=4] {$#5$}; \path (1) edge node [right] {} (2) (2) edge
      node [right] {} (3) (3) edge node (pi) {$d\pi$} (4) (4) edge
      node [right] {} (5); \path[dashed] (4) edge [bend left=45] node
      (phi) {$h$} (3);
    \end{tikzpicture}\end{center}}
\new{0}{\text{ker}\,d\pi}{T\mathcal{F}}{\pi^\ast T\M}{0,} which
induces a decomposition of $T\mathcal{F}$ as
$T\mathcal{F}=V\oplus H:=\ker d\pi\oplus h(\pi^\ast T\M)$.
% $G$-invariance of the splitting means that $H_{u g}=(dR_g)H_u$ for
% each $u\in\mathcal{F}$, where $R_gu:=u\, g$ denotes the right action
% of $g\in G$.
For $u\in \mathcal{F}$, the space $H_u$ is the \textit{horizontal
  space at} $u$ and $V_u=\{v\in T_u\mathcal{F}\colon\ (d\pi)v=0\}$ the
\textit{vertical space at} $u$.  The bundle isomorphism
\begin{equation}
  \label{Eq:horLift}
  h\colon\,\pi^\ast T\M\acong H\rightinj T\mathcal{F},\quad h_u\colon\,T_{\pi(u)}\M\acong H_u,\quad u\in\mathcal{F},
\end{equation}
is the \textit{horizontal lift} of the $G$-connection.
\end{remark}

\begin{corollary}
  To each $a X+b \partial_t\in T_{(x,t)}\M$ and each frame
  $u\in\mathcal{F}_{(x,t)}$, there exists a unique ``horizontal lift''
  $a X^*+b D_t\in H_u$ of $a X+b \partial_t$ such
  that
$$\pi_*(a X^*+b D_t)=a X+b\partial_t.$$
In explicit terms, $X^*$ is the horizontal lift of $X$ with respect to
the metric~$g_t$, and $D_t=\left.\frac d{ds}\right\vert_{s=0}u_s$
where $u_s$ is the horizontal lift based at $u$ of the curve
$s\mapsto (x,t+s)$.
\end{corollary}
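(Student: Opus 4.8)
\emph{Proof plan.}
The plan is to read off existence and uniqueness directly from the bundle isomorphism $h$ of the preceding Remark, and then to unwind the relevant definitions to obtain the explicit description of $X^*$ and $D_t$.

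First, recall that $h$ splits the exact sequence $0\to\ker d\pi\to T\mathcal{F}\xrightarrow{d\pi}\pi^\ast T\M\to 0$, so that $d\pi\circ h=\id$ and, for each $u\in\mathcal{F}_{(x,t)}$, the linear map $h_u\colon T_{(x,t)}\M\to H_u$ is an isomorphism whose inverse is the restriction $(d\pi)|_{H_u}$. Hence, given $aX+b\partial_t\in T_{(x,t)}\M$, the vector $h_u(aX+b\partial_t)\in H_u$ satisfies $\pi_\ast\bigl(h_u(aX+b\partial_t)\bigr)=aX+b\partial_t$, and it is the only element of $H_u$ with this property, since $d\pi$ is injective on $H_u$. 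By linearity of $h_u$, setting $X^\ast:=h_u(X)$ and $D_t:=h_u(\partial_t)$ gives $h_u(aX+b\partial_t)=aX^\ast+bD_t$, which establishes the first assertion.

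It then remains to identify $X^\ast$ and $D_t$. The space-time connection $\nabla$ was constructed so that $\nabla_vY=\nabla^t_vY$ for $v\in(T_xM,g_t)$; therefore the horizontal distribution $H$, restricted over the time slice $M\times\{t\}$, coincides with the horizontal distribution of the Levi-Civita connection of $g_t$ on the orthonormal frame bundle of $(M,g_t)$. Consequently $X^\ast=h_u(X)$ is exactly the classical $g_t$-horizontal lift of $X$. For $D_t$, recall that the horizontal lift $s\mapsto u_s$ based at $u$ of the curve $s\mapsto(x,t+s)$ is by definition the integral curve of the horizontal distribution lying above that curve, i.e.\ it is characterized by $u_0=u$ and $\dot u_s=h_{u_s}\bigl(\partial_t|_{(x,t+s)}\bigr)$; evaluating at $s=0$ yields $\dot u_0=h_u(\partial_t)=D_t$, as claimed.

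This is essentially a bookkeeping exercise built on the Remark; the only step worth pausing over is the compatibility of $H$ with the $g_t$-horizontal distribution in the purely spatial directions, but this is immediate from the defining relation $\nabla_vY=\nabla^t_vY$ of $\nabla$, so no genuine obstacle arises.
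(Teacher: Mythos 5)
Your proposal is correct and takes the route the paper intends: the Corollary is stated directly after the Remark introducing the bundle isomorphism $h\colon\pi^\ast T\M\to H$, and the paper gives no explicit proof precisely because the statement is read off from that isomorphism in the way you describe. Your identification of $X^\ast=h_u(X)$ with the $g_t$-horizontal lift (via $\nabla_vY=\nabla^t_vY$) and of $D_t=h_u(\partial_t)$ with $\dot u_0$ for the horizontal lift of $s\mapsto(x,t+s)$ fills in exactly the bookkeeping the paper leaves implicit.
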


We consider curves in $\M$ of the form
$$\gamma_t=(x_t,\ell_t),\quad t\in[0,T[$$
where $\ell_t$ is a monotone differentiable transformation on $[0,T[$.
The horizontal lift of such a curve $\gamma_t$ in $\M$ is a curve
$u_t$ in $\mathcal{F}$ such that $\pi u_t=\gamma_t$ and
$\nabla_{\dot\gamma}(u_te)=0$ for each $e\in\R^n$.  Then
$$/\!/_{r,s}^\gamma:=u_s^{\mstrut}u_r^{-1}\colon(T_{x_r}M,g_{\ell_r}^{\mstrut})\to(T_{x_s}M,g_{\ell_s}^{\mstrut}),\quad
0\leq r\leq s< T,$$ gives parallel transport along $\gamma_t$.  In the
following we consider the special case $\ell_t=t$.

\begin{remark}
  Vector fields and differential forms on $\M$ can be seen as
  time-dependent vector fields and differential forms on $M$.  It is
  convenient to write objects on $\M$ as $G$-equivariant functions
  on~$\mathcal{F}$.  In particular, then
  \begin{enumerate}[\rm1)]
  \item functions $f\in C^\infty(\M)$ read as
    $\tilde f\in C^\infty(\mathcal{F})$ via $\tilde f:=f\circ \pi$;
  \item time-dependent vector fields $Y$ on $M$ read as
    $\tilde Y\colon\mathcal{F}\to\R^n$ via
    $\tilde Y(u):=u^{-1}Y_{\pi(u)}$;
  \item time-dependent differential forms $\alpha$ on $M$ as
    $\tilde\alpha\colon\mathcal{F}\to(\R^n)^*$ via
    $\tilde\alpha(u)=\alpha_{\pi(u)}(u\,\cdot)$.
  \end{enumerate}
  The following formulas hold:
  \begin{align}
    &\widetilde{Xf}=X^*\tilde f,\quad\widetilde{\partial_tf}=D_t\tilde f,\quad
      \widetilde{\nabla_{\!X}^\mstrut Y}=X^*\tilde Y,\quad\widetilde{\nabla_{\!\partial_t}^\mstrut Y}=D_t\tilde Y,\notag\\
    &\widetilde{\nabla_{\!X}^\mstrut\alpha}=X^*\tilde \alpha,\quad\widetilde{\nabla_{\!\partial_t}^\mstrut \alpha}
      =D_t\tilde\alpha.\label{Eq:equiv_repr}
  \end{align}
  The proofs are straightforward. For instance, to check the last
  equality, let $u_t$ be a horizontal curve such that
  $\pi u_t=\gamma_t=(x,t)$ where $x\in M$ is fixed.  Then
  \begin{align*}
    (D_t\tilde \alpha)(u_s)&=\left.\frac d{dt}\right|_{t=s}\tilde \alpha(u_t)
                             =\left.\frac d{dt}\right|_{t=s}\alpha_{\pi(u_t)}(u_t\,\cdot)\\
                           &=\left.\frac d{dr}\right|_{r=0}\alpha_{(x,s+r)}(/\!/_{s,s+r}u_s,\cdot)\\
                           &=(\nabla_{\!\partial_t}^\mstrut \alpha)_{(x,s)}(u_s\,\cdot)
                             =(\widetilde{\nabla_{\!\partial_t}^\mstrut \alpha})(u_s).
  \end{align*}
\end{remark}

\begin{remark}
  The vector fields
  $$H_i\in\Gamma(T\mathcal{F}),\quad H_i(u)=(ue_i)^*\equiv
  h_u(ue_i),\quad i=1,\ldots,n,$$ where $(e_1,\ldots,e_n)$ denotes the
  standard basis of $\R^n$, are the standard-horizontal vector fields
  on $\mathcal{F}$. The operator
$$\Delta_{\text{\rm hor}}=\sum_{i=1}^nH_i^2$$
is called Bochner's horizontal Laplacian on $\mathcal{F}$.  Note that
\begin{align}\label{Eq:horLapl}
  \widetilde{\Delta f}=\Delta_{\text{hor}}\tilde f\quad\text{and}\quad
  \widetilde{\Delta_{\text{rough}} \alpha}=\Delta_{\text{hor}}\tilde \alpha
\end{align}
where $\Delta_{\text{rough}}=\tr(\nabla^t)^2$ is the rough Laplacian
on differential one-forms. Recall that, for fixed $t\in I$,
\begin{align}\label{Eq:Weitzenboeck}
  d\Delta_tf=\tr(\nabla^t)^2df-\Ric_t(df,\newdot)
\end{align}
by the Weitzenb\"{o}ck formula.
\end{remark}

Let $\pi\colon\mathcal{F}\to \M$ denote the canonical projection.  For
$\phi\in C^{1,2}(\M)$, we define a vector field on $\mathcal{F}$ by
$$H^\phi\in\Gamma(T\mathcal{F}),\quad H^\phi(u)=h_u(\nabla^t\phi(t,\newdot)_x),\quad
\pi(u)=(x,t).$$

Consider the following Stratonovich SDE on $\mathcal{F}$:
\begin{align}\label{EQ:SDE_on_frames}\left\{
  \begin{aligned}
    dU&=D_t(U)\,dt+\sum_{i=1}^nH_i(U)\circ dB^i-H^\phi(U)\,dt,\\
    U_s&=u_s,\ \pi(u_s)=(x,s),\ s\in{[0,T[}.
  \end{aligned}
         \right.
\end{align}
Here $B$ denotes standard Brownian motion on $\R^n$ (speeded up by the
factor $2$, i.e., $dB^idB^j=2\delta_{ij}dt$) with generator
$\Delta_{\R^n}$.  Eq.~\eqref{EQ:SDE_on_frames} has a unique solution
up to its lifetime $\zeta:=\lim\limits_{k\rightarrow \infty}\zeta_k$
where
\begin{align}\label{zeta-n}
  \zeta_k:=\inf\left\{t\in [s,T[\colon \rho_t(\pi(U_s), \pi(U_t))\geq k\right\}, \quad n\geq 1,\quad \inf\varnothing:=T,
\end{align}
and where $\rho_t$ stands for the Riemannian distance induced by the
metric $g(t)$.  We note that if $U$ solves \eqref{EQ:SDE_on_frames}
then
$$\pi(U_t)=(X_t,t)$$
where $X$ is a diffusion process on $M$ generated by
$L_t=\Delta_t-\nabla^t\phi_t$.  In case of $\phi=0$ we call $X$ a
$(g_t)$-Brownian motion on $\M$.

More precisely, we have the following result.

\begin{proposition}\label{Prop:ItoFrameBdl}
  Let $U$ be a solution to the
  \textup{SDE}~\eqref{EQ:SDE_on_frames}. Then
  \begin{enumerate}[\rm(1)]
  \item for any $C^2$-function $F\colon\mathcal{F}\to\R$,
$$d(F(U))=(D_t F)(U)\,dt+
\sum_{i=1}^n(H_iF)(U)\,dB^i+ (\Delta_{\text{\rm hor}}F)(U)\,dt-(H^\phi
F)(U)\,dt;$$
\item for any $C^2$-function $f\colon\M\to\R$, we have
$$d(f(X))=(\partial_t f)(X)\,dt+
\sum_{i=1}^n(Ue_if)\,dB^i+ (L_tf)(X)\,dt.$$
\end{enumerate}
\end{proposition}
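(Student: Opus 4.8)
The plan is to obtain (1) directly from the Stratonovich change-of-variables formula applied to $F(U)$ along the SDE \eqref{EQ:SDE_on_frames}, followed by a Stratonovich-to-Itô conversion, and then to deduce (2) by specializing (1) to $F=\tilde f=f\circ\pi$ and translating every term back to $M$ by means of the equivariance dictionary \eqref{Eq:equiv_repr}, \eqref{Eq:horLapl}.

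For (1), first take $F\in C^\infty(\mathcal F)$. Since $D_t, H_1,\dots,H_n, H^\phi$ are smooth vector fields on $\mathcal F$, the Stratonovich chain rule gives, on the stochastic interval $[s,\zeta[$ of \eqref{zeta-n},
$$d\big(F(U)\big)=(D_tF)(U)\,dt+\sum_{i=1}^n(H_iF)(U)\circ dB^i-(H^\phi F)(U)\,dt.$$
To rewrite the middle term in Itô form, note that for smooth $G$ the martingale part of $G(U)$ is $\sum_{j}(H_jG)(U)\,dB^j$, so with $G=H_iF$ and the normalization $dB^i\,dB^j=2\delta_{ij}\,dt$ the correction term is
$$\frac12\sum_{i=1}^n d\big\langle (H_iF)(U),B^i\big\rangle=\frac12\sum_{i=1}^n (H_i^2F)(U)\cdot2\,dt=(\Delta_{\text{hor}}F)(U)\,dt.$$
Substituting $\sum_i(H_iF)(U)\circ dB^i=\sum_i(H_iF)(U)\,dB^i+(\Delta_{\text{hor}}F)(U)\,dt$ into the previous display yields the claimed identity for smooth $F$; the general case $F\in C^2(\mathcal F)$ then follows by a routine localization and approximation argument, which I would only indicate.

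For (2), apply (1) to $F=\tilde f$. By \eqref{Eq:equiv_repr} we have $D_t\tilde f=\widetilde{\partial_t f}$, hence $(D_t\tilde f)(U_t)=(\partial_t f)(X_t,t)$; since $d\pi\circ h=\id$ by definition of the horizontal lift \eqref{Eq:horLift}, $(H_i\tilde f)(U_t)=df_{(X_t,t)}(U_te_i)=(U_te_i)f$; by \eqref{Eq:horLapl}, $(\Delta_{\text{hor}}\tilde f)(U_t)=(\widetilde{\Delta f})(U_t)=(\Delta_t f)(X_t)$; and $(H^\phi\tilde f)(U_t)=df_{(X_t,t)}\big(\nabla^t\phi_t(X_t)\big)=\langle\nabla^t\phi_t,\nabla^t f\rangle_{g_t}(X_t)$. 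Collecting the three drift contributions and recalling $L_t=\Delta_t-\nabla^t\phi_t$ turns $(\Delta_t f)(X)\,dt-\langle\nabla^t\phi_t,\nabla^t f\rangle_{g_t}(X)\,dt$ into $(L_tf)(X)\,dt$, which is exactly the asserted formula (here $\pi(U_t)=(X_t,t)$ with $X$ the $L_t$-diffusion, as already recorded before the proposition).

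The computation is essentially mechanical; the two points that require care are (i) the Stratonovich-to-Itô conversion, where the factor-$2$ speed-up of $B$ exactly cancels the $\tfrac12$ and produces the \emph{full} horizontal Laplacian $\Delta_{\text{hor}}$ rather than half of it, and (ii) keeping all identities on the interval $[s,\zeta[$ before the lifetime, so that the $dB^i$-integrals are genuine local martingales and no boundary contribution from explosion enters. I expect (i), i.e.\ tracking every constant through the Stratonovich correction, to be the only real pitfall.
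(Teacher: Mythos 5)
Your proof is correct: the Stratonovich chain rule followed by the Stratonovich-to-It\^o conversion (where the factor~$2$ in $dB^i\,dB^j=2\delta_{ij}\,dt$ cancels the $\tfrac12$ and yields the full $\Delta_{\text{hor}}$) gives part~(1), and specializing $F=\tilde f$ with the equivariance identities \eqref{Eq:equiv_repr}, \eqref{Eq:horLapl} gives part~(2). The paper states this proposition without proof, as the standard It\^o formula in the frame-bundle formalism for evolving metrics; your derivation is exactly the canonical one that the authors implicitly rely on, and you correctly flag the two points of care (the speed-up constant and working before the lifetime $\zeta$).
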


Let $U$ be a solution to the \textup{SDE}~\eqref{EQ:SDE_on_frames} and
$\pi(U_t)=(X_t,t)$. Furthermore let
$$/\!/_{r,t}^{\mstrut}:=U_t^{\mstrut}U_r^{-1}\colon(T_{x_r}M,g_r^{\mstrut})\to(T_{x_t}M,g_t^{\mstrut}),\quad
s\leq r\leq t<T,$$ be the induced parallel transport along $X_t$
(which by construction consists of isometries).  We use the notation
$$X_t=X_t^{(s,x)},\quad t\geq s,$$
if $X_s=x$.  Note that $X_t=X_t^{(s,x)}$ solves the equation
$$d X_t^{(s,x)}=U_t\circ \,d B_t-\nabla^t\phi_t(X_t^{(s,x)})\,dt,\quad X_s^{(s,x)}=x.$$
For any $f\in C_0^2(M)$,
$$f(X_t^{(s,x)})-f(x)-\int_s^tL_rf(X^{(s,x)}_r)\,d r=\int_s^t\big<\partrinv sr\nabla^rf(X^{(s,x)}_r), U_s\,d B_r\bigr>_s,
\quad t\in [s,T[,$$ is a martingale up to the lifetime $\zeta$.  In
the case $s=0$, we write again $X_t^{x}$ instead of~$X_t^{(0,x)}$.

\section{Derivative formula}\label{derivative-formula}

Let $L_t=\Delta_t-\nabla^t\phi_t$ where $\phi$ is
$C^{1,2}([0,T[\times M)$.  Throughout this section, we assume that the
diffusion $(X_t)$ generated by $L_t$ is non-explosive up to time $T$.
Recall that $\mu_t(dx)=\e^{-\phi_t(x)}d \vol_t$ and
\begin{align*}
  \frac{\partial}{\partial t} \mu_t(dx)
  =-\l(\partial_t\phi+\mathcal{H}\r)(t,x)\,\mu_t(dx)=-\varrho(t,x)\,\mu_t(dx)
\end{align*}
with
$$\varrho(t,x)\equiv\varrho_t(x)=(\partial_t\phi+\mathcal{H})(t,x).$$
For each $t$, we assume that $\varrho_t$ is bounded from below by a
constant depending on $t$.

For $f\in C_b(M)$ let
\begin{align}\label{Eq:Feynman-Kac}
  P_{s,t}^{\varrho}f(x)=\E\l[\exp\l(-\int_s^t\varrho(r,X_r)\, dr\r)f(X_t)\,\big|\,X_s=x\r].
\end{align}
Then
\begin{align}\label{Eq:HeatEqu}
  \frac{\partial}{\partial s}P_{s,t}^{\varrho}f=-(L_s-\varrho_s)P_{s,t}^{\varrho}f,\quad
  P_{t,t}^{\varrho}=f.
\end{align}
That $u(s,x):=P_{s,t}^{\varrho}f(x)$ represents the solution to
\eqref{Eq:HeatEqu} is easily seen from the fact that
$$\exp\l(-\int_s^r\varrho(a,X_a)\, da\r)P_{r,t}^{\varrho}f(X_r^{(s,x)}),\quad r\in [s,t],$$
is a martingale under given assumptions.

Our first step is to establish a derivative formula for
$P_{s,t}^{\varrho}$.  When the metric is static, the derivative
formula for $P_tf$ is known as Bismut formula \cite{Bismut84,EL94}.
The more general versions in \cite{Thal97} have been adapted to
Feynman-Kac semigroups in \cite{Th19}.

We fix $s\in {[0,T[}$ and consider the random family
$Q_{s,t}\in\Aut(T_{X_s}M)$, $0\leq s\leq t< T$, given as solution to
the following ordinary differential equation:
\begin{align}\label{eq-Q3}
  \frac{d Q_{s,t}}{d t}=-\l(\Ric-\frac{1}{2}\partial g+\Hess(\phi)\r)_{\partr st}\!Q_{s,t},\quad Q_{s,s}=\id,
\end{align}
where
\begin{align*} {\l(\Ric-\frac{1}{2}\partial g+\Hess(\phi)\r)}_{\partr
    st}:=\partrinv st\circ
  {\l(\Ric_t-\frac{1}{2}\partial_tg_t+\Hess_t(\phi_t)\r)}(X_t)\circ
  \partr st.
\end{align*}

\begin{theorem}\label{Der-formula}
  Let $L_t=\Delta_t-\nabla^t\phi_t$ as above. For each $t$, assume
  that both $\varrho_t$ and
  $$\big(\Ric+\Hess(\phi)-\frac{1}{2}\partial_tg\big)(t,\cdot)$$ are bounded
  below and that $|d\varrho_t|$ is bounded.  Then, for $v\in T_xM$ and
  $f\in C^1_b(M)$,
  \begin{align}
    (dP_{s,t}^{\varrho}f)(v)
    &=\E^{(s,x)}\l[\exp\l(-\int_s^t\varrho_r(X_r)\, d r\r)\right.\notag\\
    &\quad\times\l.\l(df(\partr st Q_{s,t}v)(X_t)- f(X_t)\int_s^t d\varrho _r(\partr sr Q_{s,r}v)\,d r\r)\r].
      \label{Eq:Der-formula}
  \end{align}
\end{theorem}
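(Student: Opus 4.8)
The plan is to prove the Bismut-type formula \eqref{Eq:Der-formula} by constructing a suitable local martingale along the $L_t$-diffusion, differentiating the Feynman--Kac semigroup along a horizontal variation of the starting point, and then identifying the boundary terms via an integration-by-parts argument based on Itô's formula (Proposition~\ref{Prop:ItoFrameBdl}).

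First I would fix $s$, $x$, $v\in T_xM$, and $f\in C^1_b(M)$, and set $N_r := P^{\varrho}_{r,t}f(X_r)$ for $r\in[s,t]$, where $X_r = X_r^{(s,x)}$. By \eqref{Eq:HeatEqu} and Proposition~\ref{Prop:ItoFrameBdl}(2) applied to the space-time function $(r,y)\mapsto P^\varrho_{r,t}f(y)$, the process
\begin{align*}
  M_r := \exp\!\left(-\int_s^r\varrho_a(X_a)\,da\right) N_r
\end{align*}
is a local martingale, with stochastic differential given (modulo the $dr$-terms which cancel by the backward equation) by
\begin{align*}
  dM_r = \exp\!\left(-\int_s^r\varrho_a(X_a)\,da\right)\bigl\langle \partrinv sr \nabla^r(P^\varrho_{r,t}f)(X_r),\,U_s\,dB_r\bigr\rangle_s.
\end{align*}
The key analytic input is a gradient-propagation identity: using the Weitzenböck formula \eqref{Eq:Weitzenboeck}, the commutation of $D_t$ with the horizontal Laplacian, and the space-time connection's compatibility $\frac{d}{dt}|X|_{g_t}^2 = 2\langle X,\nabla_{\partial_t}X\rangle_{g_t}$, one shows that the pull-back $\widetilde{\nabla^r(P^\varrho_{r,t}f)}(U_r) \in (\R^n)^*$ satisfies a linear SDE whose drift is exactly $-(\Ric - \tfrac12\partial g + \Hess\phi)$ acting on the dual side, together with a $d\varrho_r$ correction coming from differentiating $\varrho_r P^\varrho_{r,t}f$ — here the assumption that $|d\varrho_t|$ is bounded enters. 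This is where $Q_{s,r}$ defined by \eqref{eq-Q3} is tailored: $Q_{s,r}^{*}$ is the integrating factor that absorbs the Ricci-Hessian drift.

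Next I would combine these: consider the process
\begin{align*}
  Y_r := \exp\!\left(-\int_s^r\varrho_a(X_a)\,da\right)\Bigl[ df(\partr sr Q_{s,r}v)(X_r)\cdot(\text{something}) \Bigr]
\end{align*}
— more precisely, I would show that
\begin{align*}
  r\mapsto \exp\!\left(-\int_s^r\varrho_a(X_a)\,da\right)\bigl\langle \partrinv sr\nabla^r(P^\varrho_{r,t}f)(X_r),\,Q_{s,r}v\bigr\rangle_s + N_s^{\varrho}\!\int_s^r d\varrho_a(\partr sa Q_{s,a}v)\,da
\end{align*}
(where $N^\varrho$ denotes the Feynman--Kac-weighted value) is a local martingale on $[s,t]$; this follows by Itô's product rule, the SDE for the weighted gradient, the ODE \eqref{eq-Q3} for $Q_{s,r}$, and the cancellation of the Ricci-type drifts. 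Evaluating at $r=s$ gives $(dP^\varrho_{s,t}f)(v)$ (since $Q_{s,s}=\id$, $\partr ss = \id$, and the integral term vanishes), and at $r=t$ gives the integrand inside the expectation in \eqref{Eq:Der-formula} (since $P^\varrho_{t,t}f = f$). Taking expectations and using boundedness to upgrade the local martingale to a genuine martingale — here the lower bounds on $\varrho_t$ and on $\Ric+\Hess\phi-\tfrac12\partial_tg$ control $Q_{s,r}$ and the exponential weight, while $f\in C^1_b$ controls $df$ — yields the formula.

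The main obstacle I anticipate is the justification that the relevant local martingale is a true martingale up to time $t$, i.e. the integrability/uniform-integrability estimates: one must bound $\|Q_{s,r}\|$ (controlled by the lower bound $K(t)$ on $\Ric+\Hess\phi-\tfrac12\partial_tg$ via Gronwall), bound the exponential Feynman--Kac weight (controlled by the lower bound on $\varrho$), and control $\nabla^r(P^\varrho_{r,t}f)$ along the process — the last typically requires an a priori (possibly dimension-dependent) gradient estimate on $P^\varrho_{r,t}f$ obtained by a localization/stopping-time argument using the exhaustion $\zeta_k$ from \eqref{zeta-n}, then passing to the limit. Non-explosiveness of $X$ is used here. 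The computational core — deriving the SDE for the pulled-back weighted gradient from the Weitzenböck formula and the space-time Bochner identity \eqref{Eq:horLapl} — is routine but must be done carefully to track the $\tfrac12\partial_tg$ term that arises from $\nabla_{\partial_t}$ acting on one-forms.
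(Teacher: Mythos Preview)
Your approach is essentially the paper's: both apply It\^o's formula on the frame bundle to the process $r\mapsto \exp\bigl(-\int_s^r\varrho_u(X_u)\,du\bigr)\,dP^\varrho_{r,t}f(\partr sr Q_{s,r}v)$, use the Weitzenb\"ock formula together with the defining ODE \eqref{eq-Q3} for $Q_{s,r}$ to cancel the Ricci--Hessian drift (leaving only the $d\varrho_r$ term), and then integrate from $s$ to $t$ and take expectations. The paper is actually terser than you on the local-to-true martingale justification; note only that the compensating integral should carry the running weight $\exp\bigl(-\int_s^a\varrho_u(X_u)\,du\bigr)P^\varrho_{a,t}f(X_a)$ inside the $da$-integral rather than a fixed $N^\varrho_s$, though after conditioning on $\mathcal F_a$ and using the martingale property of the scalar Feynman--Kac process this reduces to the form stated in \eqref{Eq:Der-formula}.
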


\begin{proof}
  By the definition of $Q$ as the solution to \eqref{eq-Q3}, we have
  \begin{align}\label{Eq:defQrs}
    d(U_s^{-1}Q_{s,r})=-U_s^{-1}\big(\Ric+\Hess(\phi)\big)_{\partr sr}Q_{s,r}.
  \end{align}
  Set
  $$N_r(v)=d P_{r,t}^{\varrho}f(\partr sr Q_{s,r}v),\quad s\leq r\leq
  t.$$ Write $N_r(v)=F(U_r,q_r(v))$ where
  \begin{align*}
    F(u,w)&:=(d P_{r,t}^{\varrho}f)_{x}(uw), \quad \pi(u)=(x,r)\text{ and }w\in \R^n,\\
    q_r(v)&:=U_s^{-1}Q_{s,r}v.
  \end{align*}
  By means of Proposition \ref{Prop:ItoFrameBdl}, we have
  \begin{align}\label{Eq:ItoF}
    d(F(U_r,w))\mequal(D_r F)(U_r,w)\,dr+
    (\Delta_{\text{\rm hor}}F)(U_r,w)\,dr-(H^\phi F)(U_r,w)\,dr
  \end{align}
  where
  \begin{align*}
    (D_r F)(u,w)=\partial_r(d P_{r,t}^{\varrho}f)_x(uw)
    +\frac12(\partial_rg_r)\l((d P_{r,t}^{\varrho}f)^{\sharp_{g_r}},uw\r)
  \end{align*}
  and where $\mequal$ stands for equality modulo the differential of a
  local martingale.  Using the Weitzen\-b\"{o}ck formula we observe
  that
  \begin{align*}
    \partial_r(d P_{r,t}^{\varrho}f)
    &=-d(L_r-\varrho_r)P_{r,t}^{\varrho}f\\
    &=-d(\Delta_r-\nabla^r\phi_r-\varrho_r)P_{r,t}^{\varrho}f\\
    &=-\tr(\nabla^t)^2d P_{r,t}^{\varrho}f
      +d P_{r,t}^{\varrho}f\l((\Hess\phi_r)^{\sharp_{g_r}}\r)
      +d P_{r,t}^{\varrho}f(\Ric_r^{\sharp_{g_r}})
      +d\l(\varrho_r\,P^{\varrho}_{r,t}f\r)_{X_r}.
  \end{align*}
  Taking \eqref{Eq:horLapl} and \eqref{Eq:equiv_repr} into account, we
  have
  \begin{align*}
    (\Delta_{\text{\rm hor}}F)(U_r,w)=\tr(\nabla^r)^2d P_{r,t}^{\varrho}f(U_rw)
  \end{align*}
  and
  \begin{align*}
    (H^\phi F)(U_r,w)=\Hess(\phi_r)\l(d P_{r,t}^{\varrho}f)^{\sharp_{g_r}},U_rw\r)=
    d P_{r,t}^{\varrho}f\l((\Hess(\phi_r))^{\sharp_{g_r}}\r)(U_rw).
  \end{align*}
  Thus Eq.~\eqref{Eq:ItoF} rewrites as
  \begin{align}\label{Eq:ItoFnew}
    d(F(U_r,w))\mequal
    d P_{r,t}^{\varrho}f\,(\Ric_r^{\sharp_{g_r}})U_rw
    +d\l(\varrho_r\,P^{\varrho}_{r,t}f\r)_{X_r}U_rw+\frac12(\partial_rg_r)\l((d P_{r,t}^{\varrho}f)^{\sharp_{g_r}},U_rw\r).
  \end{align}
  Combining Eqs.~\eqref{Eq:ItoFnew} and \eqref{Eq:defQrs} we thus
  obtain
  \begin{align*}
    d N_r(v)&=d\l(F(U_r,\newdot)\r)(q_r(v))+F\l(U_r,\partial_rq_r(v)\r)dr\\
            &\mequal d\l(\varrho_r\,P^{\varrho}_{r,t}f\r)_{X_r}\partr sr Q_{s,r} v\,dr\\
            &=\l(\varrho_r(X_r)d P_{r,t}^{\varrho}f(\partr sr Q_{s,r} v)
              +P_{r,t}^{\varrho}f(X_r)d\varrho_r(\partr sr Q_{s,r}v)\r)dr,
  \end{align*}
  Hence we get
  \begin{align*}
    d\l(\exp\l(-\int_s^r\varrho_u(X_u)\, d u\r)N_r(v)\r)
    \mequal -\exp\l(-\int_s^r\varrho_u(X_u)\, d u\r)P^{\varrho}_{r,t}f(X_r)d\varrho_r(\partr sr Q_{s,r}v)\,dr.
  \end{align*}
  Integrating this equality from $s$ to $t$ and taking expectation
  gives Eq.~\eqref{Eq:Der-formula}.
\end{proof}

\begin{corollary}\label{est-gradient-ineq}
  Suppose that $\varrho_t$ is bounded below for each $t$, and assume
  that there are functions $\kappa,K\in C([0,T[)$ such that
  $|d \varrho_t|\leq\kappa(t)$, respectively
  \begin{align*}
    \Ric_t-\frac{1}{2}\partial_tg_t+\Hess_t(\phi_t)\geq K(t).
  \end{align*}
  Then, for $f\in C^{\infty}_0(M)$ and $f\geq 0$,
  \begin{align*}
    \l|\nabla^s P_{s,t}^{\varrho}f\r|_s\leq \exp\l(-\int_s^tK(r)\,dr\r)P_{s,t}^{\varrho}|\nabla ^tf|_t+P_{s,t}^{\varrho}f \int_s^t \kappa(r)\exp\l(-\int_s^rK(u)\,du\r)dr.
  \end{align*}
\end{corollary}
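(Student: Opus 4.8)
\emph{Proof proposal.} The plan is to start from the derivative formula \eqref{Eq:Der-formula} of Theorem~\ref{Der-formula}, take the $g_s$-norm of the linear functional $v\mapsto (dP_{s,t}^{\varrho}f)(v)$ on $T_xM$, and bound the two terms inside the expectation by means of a pointwise estimate on the operators $Q_{s,r}$. Since $\big|\nabla^sP_{s,t}^{\varrho}f\big|_s(x)=\sup\{(dP_{s,t}^{\varrho}f)(v):v\in T_xM,\ |v|_s\leq 1\}$, it suffices to estimate $|(dP_{s,t}^{\varrho}f)(v)|$ by a constant (depending on $x$) times $|v|_s$.

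First I would control $Q_{s,r}$. Because the space-time parallel transport $\partr sr$ consists of isometries $(T_{X_s}M,g_s)\to(T_{X_r}M,g_r)$, one has $|\partr sr w|_{g_r}=|w|_{g_s}$, and hence, for $s\leq r\leq t$,
\[
  \frac{d}{dr}|Q_{s,r}v|_{g_s}^2
  =-2\big\langle(\Ric-\tfrac12\partial g+\Hess(\phi))_{\partr sr}Q_{s,r}v,\,Q_{s,r}v\big\rangle_{g_s}
  =-2\big\langle A_r\,\partr sr Q_{s,r}v,\,\partr sr Q_{s,r}v\big\rangle_{g_r},
\]
where $A_r=(\Ric_r-\tfrac12\partial_rg_r+\Hess_r(\phi_r))(X_r)$. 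By the assumed curvature lower bound $A_r\geq K(r)$, the right-hand side is $\leq -2K(r)|Q_{s,r}v|_{g_s}^2$, so Gronwall's lemma gives $|Q_{s,r}v|_{g_s}\leq \exp\!\big(-\int_s^rK(u)\,du\big)|v|_{g_s}$; in particular $|\partr st Q_{s,t}v|_{g_t}=|Q_{s,t}v|_{g_s}\leq\exp\!\big(-\int_s^tK(r)\,dr\big)|v|_{g_s}$.

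Then I would substitute this into \eqref{Eq:Der-formula}. By Cauchy–Schwarz applied pointwise, $|df(\partr st Q_{s,t}v)(X_t)|\leq |\nabla^tf|_t(X_t)\,\exp\!\big(-\int_s^tK\big)|v|_s$ and $|d\varrho_r(\partr sr Q_{s,r}v)|\leq \kappa(r)\exp\!\big(-\int_s^rK(u)\,du\big)|v|_s$; using $f\geq0$ so that $|f(X_t)|=f(X_t)$, the triangle inequality applied to the integrand yields
\[
  |(dP_{s,t}^{\varrho}f)(v)|\leq |v|_s\Big(\e^{-\int_s^tK(r)\,dr}\,\E^{(s,x)}\big[\e^{-\int_s^t\varrho_r(X_r)\,dr}|\nabla^tf|_t(X_t)\big]+\Big(\!\int_s^t\kappa(r)\e^{-\int_s^rK(u)\,du}\,dr\Big)\E^{(s,x)}\big[\e^{-\int_s^t\varrho_r(X_r)\,dr}f(X_t)\big]\Big).
\]
Recognizing the two expectations as $P_{s,t}^{\varrho}|\nabla^tf|_t(x)$ and $P_{s,t}^{\varrho}f(x)$, and taking the supremum over $|v|_s\leq 1$, gives the claimed inequality. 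Finiteness of all quantities (which legitimizes the manipulations) follows since on the compact interval $[s,t]$ the function $\varrho_r$ is bounded below, $|\nabla^tf|_t$ is bounded because $f\in C_0^\infty(M)$, and $\kappa$ is continuous hence bounded. The only mildly delicate point is purely bookkeeping: keeping track of which metric each norm and inner product refers to, and systematically using metric-compatibility of the space-time connection to transport everything back to $(T_{X_s}M,g_s)$; once that is in place the estimate is an immediate consequence of Gronwall's lemma and the triangle inequality.
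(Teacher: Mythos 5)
Your proposal is correct and follows essentially the same route as the paper's own (very terse) proof: the paper states the bound $\|Q_{s,t}\|\leq\exp(-\int_s^tK(r)\,dr)$ and then says "the inequality follows"; you have simply supplied the missing details — the Gronwall argument for $Q_{s,r}$, exploiting that the space-time parallel transport is a family of isometries, and the pointwise triangle/Cauchy–Schwarz estimates inside the expectation in \eqref{Eq:Der-formula}.
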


\begin{proof}
  The condition
  $\Ric_t-\frac{1}{2}\partial_tg_t+\Hess_t(\phi_t)\geq K(t)$ implies
  that
$$\|Q_{s,t}\|\leq \exp\l(-\int_s^tK(r)\,dr\r).$$
The inequality follows thus from the bound
$|d \varrho_t|\leq\kappa(t)$.
\end{proof}

\section{Dimension-free Harnack inequalities}\label{Harnack-ineq-section}
We first derive two Harnack type inequalities for
$P_{s,t}^{\varrho}$. Such dimension-free Harnack inequalities have
been studied first by Wang, see e.g.~\cite{Wbook14} for more results
in this direction.  We denote by $\mathcal{B}_b(M)$ the space of
bounded measurable functions on $M$.

\begin{theorem}\label{Harnack-ineq}
  Suppose that $\varrho$ is bounded below,
  $|d \varrho_t|\leq\kappa(t)$, and
  \begin{align*}
    \Ric_t-\frac{1}{2}\partial_tg_t+\Hess_t(\phi_t)\geq K(t).
  \end{align*}
  The following two Harnack type inequalities hold for any $p>1$.
  \begin{enumerate}[\rm(i)]
  \item For $0\leq s\leq t<T$ and any non-negative function
    $f\in\mathcal{B}_b(M)$,
    \begin{align}
      (P_{s,t}^{\varrho}f)^p(x)
      &\leq (P_{s,t}^{\varrho}f^p)(y)\notag\\
      &\quad\times\exp{\l((p-1)\int_s^t\sup\varrho_r^-\,d r
        +\frac{p\rho_s^2(x,y)}{4(p-1)\alpha(s,t)}+\frac{p\eta(s,t)\rho_s(x,y)}{\alpha(s,t)}\r)},
        \label{Harnack-ineq1}
    \end{align}
    where
    \begin{align*}
      &\alpha(s,t):=\int_s^t\exp\l(2\int_s^rK(u)\,d u\r)\,d r;\\
      &\eta(s,t):=\int_s^t\int_s^v\kappa(r)\exp\l(2\int_s^vK(u)\,d
        u-\int_s^rK(u)\,d u\r) \, d r\,d v.
    \end{align*}
  \item For $0\leq s\leq t<T$ and any non-negative function
    $f\in\mathcal{B}_b(M)$,
    \begin{align*}
      (P_{s,t}^{\varrho}f)^{p}(x)
      &\leq
        (P_{s,t}^{\varrho}f^{p})(y)\,\E^y\left[\exp\l(-(p-1)\int_s^t\varrho_r(X_r)\,d r\right)\right]\\
      &\qquad\times\exp\l(\frac{p\rho_s^2(x,y)}{4(p-1)\alpha(s,t)}+\frac{2p \eta(s,t)}{\alpha(s,t)}\rho_s(x,y)\r).
    \end{align*}
  \end{enumerate}

\end{theorem}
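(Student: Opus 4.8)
The strategy follows the coupling-free, "interpolation along a path" method of Arnaudon--Thalmaier--Wang (see \cite{Wbook14}), adapted to the two-parameter Feynman--Kac semigroup. Fix $s\le t<T$ and two points $x,y\in M$. Choose a smooth curve $\sigma\colon[s,t]\to M$ with $\sigma_s=y$, $\sigma_t=x$, run along a time-parametrization $r\mapsto\ell(r)$ so that $\sigma$ is (up to reparametrization) a minimal $g_s$-geodesic from $y$ to $x$ of length $\rho_s(x,y)$; the precise choice of the speed function will be optimized at the end. For $p>1$ introduce the interpolation
\begin{align*}
  \Phi_r:=\log P_{r,t}^{\varrho}f^{\,\theta(r)}(\sigma_r),\qquad r\in[s,t],
\end{align*}
where $\theta\colon[s,t]\to[1,p]$ is a smooth decreasing function with $\theta_s=p$, $\theta_t=1$, to be chosen later. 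Then $\Phi_t=\log f(x)$ and $e^{\Phi_s}=\big(P_{s,t}^\varrho f^{\,p}(y)\big)^{1}$ after adjusting $\theta$; more precisely we will compare $p\log P_{s,t}^\varrho f(x)$ with $\log P_{s,t}^\varrho f^p(y)$ by bounding $\frac{d}{dr}$ of an appropriate quantity. The heart of the matter is the inequality
\begin{align*}
  \frac{d}{dr}\Big(\theta_r\log P_{r,t}^{\varrho}f(\sigma_r)\Big)
  \ \ge\ -\,C_r
\end{align*}
for an explicit $C_r$ built from $\dot\theta_r$, $|\dot\sigma_r|_{g_r}$, the gradient estimate of Corollary 3.3, the lower bound $\sup\varrho_r^-$, and Young's inequality; integrating from $s$ to $t$ yields the stated exponential factor.

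The key computational steps, in order, are: (1) \emph{Differentiate along the curve.} Using the backward Kolmogorov equation \eqref{Eq:HeatEqu}, $\partial_r P_{r,t}^\varrho g=-(L_r-\varrho_r)P_{r,t}^\varrho g$, together with the chain rule for $r\mapsto\sigma_r$ and for $r\mapsto\theta_r$ (which produces a term $\dot\theta_r\,P_{r,t}^\varrho(f^{\theta_r}\log f)$), compute $\frac{d}{dr}\log P_{r,t}^\varrho f^{\theta_r}(\sigma_r)$. The $L_r$-term is harmless because we only need a lower bound after the substitution $g=f^{\theta_r}$; the essential point is Jensen/Hölder's inequality $P_{r,t}^\varrho(f^{\theta_r}\log f^{\theta_r})\ge \theta_r P_{r,t}^\varrho f^{\theta_r}\log (P_{r,t}^\varrho f^{\theta_r})^{?}$ — one has to be careful here and instead use the elementary inequality $a\log a-a\log b\ge a-b$ to control the entropy term, which is what makes the $\theta$-derivative contribute favourably rather than being an obstruction. (2) \emph{Control the spatial-gradient term.} The term $\langle \dot\sigma_r,\nabla^{g_r}\log P_{r,t}^\varrho f^{\theta_r}\rangle_{g_r}$ is bounded using Corollary 3.3 applied to $f^{\theta_r}$: $|\nabla^s P_{s,t}^\varrho g|_s\le e^{-\int_s^t K}P_{s,t}^\varrho|\nabla^t g|_t+P_{s,t}^\varrho g\int_s^t\kappa(u)e^{-\int_s^u K}du$; after dividing by $P_{r,t}^\varrho f^{\theta_r}$ and applying Young's inequality $\langle\dot\sigma_r,\cdot\rangle\le \varepsilon_r|\dot\sigma_r|^2 + \frac{1}{4\varepsilon_r}|\cdot|^2$, the weights $e^{-\int K}$ and the $\kappa$-integral assemble into exactly $\alpha(s,t)$ and $\eta(s,t)$. (3) \emph{Optimize the free functions.} Choose $\dot\theta_r$ proportional to $-\theta_r^2 e^{2\int_s^r K}$ (equivalently $\theta_r^{-1}$ affine in $\alpha(s,r)$) and $|\dot\sigma_r|_{g_r}$ proportional to $e^{2\int_s^r K}$ so that the $r$-integral collapses; tracking constants gives the coefficient $\frac{p\rho_s^2(x,y)}{4(p-1)\alpha(s,t)}$ and the linear-in-$\rho_s$ term with coefficient $\tfrac{p\eta(s,t)}{\alpha(s,t)}$. (4) \emph{Integrate and exponentiate} to get (i). For part (ii), instead of estimating $\sup\varrho_r^-$ crudely, retain the Feynman--Kac weight: run the same argument but keep the factor $\exp(-\int_s^t\varrho_r(X_r)dr)$ inside the expectation on the right-hand side by applying Hölder's inequality with exponents $p$ and $p/(p-1)$ to the representation $P_{s,t}^\varrho f(y)=\E^y[e^{-\int\varrho}f(X_t)]$ at the very end, which reproduces the factor $\E^y[\exp(-(p-1)\int_s^t\varrho_r(X_r)dr)]$ and replaces $\eta$ by $2\eta$ (because the splitting of the linear term is done differently).

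\textbf{Main obstacle.} The delicate point is Step (1): justifying the differentiation $\frac{d}{dr}\log P_{r,t}^\varrho f^{\theta_r}(\sigma_r)$ and, more importantly, extracting a \emph{useful lower bound} from the combination of the $(L_r-\varrho_r)$-term (which after the substitution is not a pure heat evolution of $f^{\theta_r}$ because $\theta_r$ depends on $r$) and the entropy term $\dot\theta_r P_{r,t}^\varrho(f^{\theta_r}\log f)$. This requires the correct elementary convexity inequality so that the "cost" of changing the exponent is exactly compensated, and it requires that $f$ be bounded (and, for the differentiation, that one first proves the estimate for strictly positive smooth $f$ and then removes positivity by a limiting argument, approximating $f\ge 0$ by $f+\delta$ and letting $\delta\downarrow 0$ using the boundedness of $\varrho^-$ on $[s,t]$). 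The curvature lower bound and $|d\varrho_t|\le\kappa(t)$ enter only through Corollary 3.3 and are otherwise routine; non-explosion of $X$ is what makes all the expectations finite and all the local martingales true martingales, so the stated hypotheses are exactly what is needed.
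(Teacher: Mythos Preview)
Your interpolation is the wrong one, and the gap is precisely where you flag it as ``harmless''. Differentiating $\Phi_r=\log P_{r,t}^{\varrho}f^{\theta(r)}(\sigma_r)$ (or $\theta_r\log P_{r,t}^{\varrho}f(\sigma_r)$; the proposal oscillates between the two) produces, via $\partial_r P_{r,t}^{\varrho}=-(L_r-\varrho_r)P_{r,t}^{\varrho}$, the term
\[
-\frac{L_r u_r}{u_r}(\sigma_r)=-\frac{\Delta_r u_r}{u_r}(\sigma_r)+\big\langle\nabla^r\phi_r,\nabla^r\log u_r\big\rangle(\sigma_r),\qquad u_r:=P_{r,t}^{\varrho}f^{\theta_r}.
\]
At a single point $\sigma_r$ the quantity $-\Delta_r u_r/u_r$ has no sign whatsoever; writing it as $-\Delta_r\log u_r-|\nabla^r\log u_r|^2$ only makes this worse (the Hessian trace of $\log u_r$ is uncontrolled). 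No convexity inequality for the entropy term can absorb this. Secondly, neither of your interpolants has the right endpoints: at $r=t$ you get $\log f(x)$, not $p\log P_{s,t}^{\varrho}f(x)$, so even if the derivative were bounded you would not be comparing the two quantities in the theorem.

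The paper's argument is the genuine Arnaudon--Thalmaier--Wang semigroup interpolation: with $s=0$ for notational ease, one sets
\[
\varphi(r)=\log\,P_{0,r}^{p\varrho}\big[(P_{r,t}^{\varrho}f)^p\big](y_r),
\]
where $y_r=\gamma_{h(r)}$ runs along a $g_0$-geodesic from $x$ to $y$ with an optimized reparametrization $h$. The \emph{outer} semigroup $P_{0,r}^{p\varrho}$ is the missing ingredient: by It\^o's formula applied to $(P_{r,t}^{\varrho}f)^p(X_r)$, the $L_r$-term becomes a martingale increment under $\E^{y_r}$ and one is left with the nonnegative drift $p(p-1)(P_{r,t}^{\varrho}f)^{p-2}|\nabla^r P_{r,t}^{\varrho}f|^2$. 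This is exactly the quadratic term that pairs with the spatial-gradient contribution $h'(r)\langle\nabla^0 P_{0,r}^{p\varrho}[(P_{r,t}^{\varrho}f)^p],\nu_r\rangle$; the latter is controlled by Corollary~\ref{est-gradient-ineq} applied to $P_{0,r}^{p\varrho}$ (not to $P_{r,t}^{\varrho}f^{\theta_r}$), and then the elementary inequality $aA^2+bA\ge -b^2/4a$ finishes Step~(2)--(3) with the stated constants. Part~(ii) is obtained from the same integrated inequality for $P_{0,t}^{p\varrho}f^p(y)$ by a different splitting of the Feynman--Kac weight, without first passing to $\sup\varrho^-$. In short: you must carry an outer expectation along the path; without it the Laplacian term does not disappear.
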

\begin{proof}
  To facilitate the notion we restrict ourselves to the case $s=0$.
  By approximation and the monotone class theorem, we may assume that
  $f\in C^2(M)$ of compact support and $\inf_M f > 0$.  Given
  $x \neq y$ and $t>0$, let $\gamma\colon [0,t]\rightarrow M$ be the
  constant speed $g_0$-geodesics from $x$ to $y$ of length
  $\rho_0(x,y)$. Let $\nu_s=d \gamma_s/d s$. Thus we have
  $|\nu_s|_0=\rho_0(x,y)/t$. Let
$$h(s)=t\,\frac{\int_0^s\exp\l(2\int_0^rK(u)\,d u\r)d r}{\int_0^t\exp\l(2\int_0^rK(u)\,d u\r)d r}.$$
Then $h(0)=0$ and $h(t)=t$. Let $y_s=\gamma_{h(s)}$ and
\begin{align*}\varphi(s)&=\log \E^{y_s}\l(\exp{\l(-\int_0^{s}\varrho_r(X_r)\, dr\r)}P^{\varrho}_{s,t}f(X_{s})\r)^p\\
                        &=\log P_{0,s}^{p\varrho}(P^{\varrho}_{s,t}f)^p(y_s),
                          \quad s\in [0,t].
\end{align*}
To determine $\varphi'(s)$, we first note that
\begin{align*}
  d(P_{s,t}^{\varrho}f(X_s))^p
  &=d M_s+\l((L_s+\partial_s)(P^{\varrho}_{s,t}f)^p(X_s)\r)d s\\
  &=d M_s+ p(p-1)(P_{s,t}^{\varrho}f)^{p-2}(X_s)\,|\nabla^sP^{\varrho}_{s,t}f|_s^2(X_s)\,ds \\ &\quad+p\varrho_s(X_s)(P^{\varrho}_{s,t}f)^p(X_s)\,d s, \quad  s\leq\zeta_k,
\end{align*}
where $M_s$ is the local martingale part of
$(P_{s,t}^{\varrho}f)^p(X_s)$. This implies
\begin{align*}
  &\E^{x}\l(\exp\l(-\int_0^{s\wedge\zeta_k}\varrho_r(X_r)\, dr\r)P^{\varrho}_{s\wedge\zeta_k,t}f(X_{s\wedge\zeta_k})\r)^p-(P_{0,t}^{\varrho}f)^p(x)\\
  &\quad \geq p(p-1)\E^x\l[\int_0^{s\wedge \zeta_k}\exp{\l(-p\int_0^{u}\varrho_r(X_r)\, dr\r)}(P_{u,t}^{\varrho}f)^{p-2}(X_u)|\nabla^uP_{u,t}^{\varrho}f|_u^2(X_u)\, d u\r].
\end{align*}
Since $\inf_M f>0$, by letting $k\rightarrow \infty$, we deduce that
\begin{align}
  \E^{x}&\l(\exp\l(-\int_0^{s}\varrho_r(X_r)\, dr\r)P^{\varrho}_{s,t}f(X_{s})\r)^p-(P_{0,t}^{\varrho}f)^p(x)\notag \\
        &\geq p(p-1)\int_0^{s}\mathbb{E}^x\l[\exp{\l(-p\int_0^{u}\varrho_r(X_r)\, dr\r)}
          (P_{u,t}^{\varrho}f)^{p-2}|\nabla^uP_{u,t}^{\varrho}f|_u^2(X_u)\r]\,d u.\label{Ito-eq}
\end{align}
Hence, for each $x\in M$,
\begin{align*}
  &\frac{\partial}{\partial s}\E^{x}\l(\exp\l(-\int_0^{s}\varrho_r(X_r)\, dr\r)P^{\varrho}_{s,t}f(X_{s})\r)^p\\
  &\quad\geq p(p-1)\mathbb{E}^x\l[\exp{\l(-p\int_0^{s}\varrho_r(X_r)\, dr\r)}(P_{s,t}^{\varrho}f)^{p-2}|\nabla^sP_{s,t}^{\varrho}f|_s^2(X_s)\r].
\end{align*}
Moreover, by adopting Corollary \ref{est-gradient-ineq} for
$P_{s,t}^{p\varrho}$, i.e.
$$|\nabla^s P_{s,t}^{p\varrho}f|_s\leq \exp\l(-\int_s^tK(r)\,d r\r)P_{s,t}^{p\varrho}\,|\nabla ^tf|_t
+pP_{s,t}^{p\varrho}f \int_s^t \kappa(r)\exp\l(-\int_s^rK(u)\,d
u\r)dr,$$ we thus obtain for $s\in [0,t]$,
\begin{align*}
  \frac{d \varphi(s)}{d s}
  &\geq \left(\frac{1}{P_{0,s}^{p\varrho}(P_{s,t}^{\varrho}f)^p}\Bigg\{p(p-1)P_{0,s}^{p\varrho}\l((P_{s,t}^{\varrho}f)^p\,|\nabla^s\log
    P_{s,t}^{\varrho}f|^2_s\r)+h'(s)\l<\nabla^0P_{0,s}^{p\varrho}(P_{s,t}^{\varrho}f)^{p},\nu_s\r>_0\Bigg\}\right)(y_s)\\
  &\geq \left(\rule{0em}{1.7em}
    \frac{p}{P_{0,s}^{p\varrho}(P_{s,t}^{\varrho}f)^p}\Bigg\{(p-1)P_{0,s}^{p\varrho}\l((P_{s,t}^{\varrho}f)^{p-2}\,|\nabla^sP_{s,t}^{\varrho}f|^2_s\r)\right.\\
  &\qquad-\frac{\rho_0(x,y)}{t} \exp\l(-\int_0^sK(u)\,d u\r)
    h'(s)P_{0,s}^{p\varrho}\l((P_{s,t}^{\varrho}f)^{p-1}|\nabla^s P_{s,t}^{\varrho}f|_s\r)\\
  &\qquad
    -\frac{\rho_0(x,y)}{t}h'(s)\,P_{0,s}^{p\varrho}\,(P_{s,t}^{\varrho}f)^p \left.\int_0^s \kappa(r)\exp\l(-\int_0^rK(u)\,d u\r) \,d r\Bigg\}\rule{0em}{1.7em}\right)(y_s)\\
  &\geq\left(\rule{0em}{1.7em}
    \frac{p}{P^{p\varrho}_{0,s}(P^{\varrho}_{s,t}f)^p}P^{p\varrho}_{0,s}\Bigg\{(P^{\varrho}_{s,t}f)^p\Bigg((p-1)|\nabla^s\log
    P^{\varrho}_{s,t}f|^2_s\right.\\
  &\qquad-\frac{\rho_0(x,y)}{t}h'(s)\exp\l(-\int_0^s K(u)\, d u\r)|\nabla^s\log
    P^{\varrho}_{s,t}f|_s \\
  &\qquad-\frac{\rho_0(x,y)}{t}h'(s) \left.\int_0^s \kappa(r)\exp\l(-\int_0^rK(u)\,d u\r)\, d r\Bigg)\Bigg\}
    \rule{0em}{1.7em}\right)(y_s)\\
  &\geq \frac{-p h'(s)^2}{4(p-1)t^2}\exp\l(-2\int_0^sK(u)\,d u\r)\rho_0(x,y)^2
  \\
  &\qquad-\frac pt h'(s) \int_0^s \kappa(r)\exp\l(-\int_0^rK(u)\,d u\r)\,d r\,\rho_0(x,y),
\end{align*}
where the last inequality follows from the simple fact that
$$aA^2+bA\geq-\frac{b^2}{4a}, \quad a>0.$$
Since
$$h'(s)=\frac{t\exp\left(2\displaystyle\int_0^sK(u)\,d u\right)}
{\displaystyle\int_0^t\exp\left(2\int_0^rK(u)\,d u\right)\,d r},$$ we
thus arrive at
\begin{align*}
  \frac{d \varphi(s)}{ds}
  &\geq -\frac{p\exp\l(\displaystyle\int_0^s2K(u)\,d u\r)}{4(p-1)
    \l(\displaystyle\int_0^t\exp\l(2\int_0^rK(u)\,d u\r)\,d r\r)^2}\,\rho_0(x,y)^2\\
  &\quad-\frac{\displaystyle p\exp\l(2\int_0^sK(u)\,d u\r)
    \int_0^s \kappa(r)\exp\l(\displaystyle-\int_0^rK(u)\,d u\r) \,d r}{\displaystyle\int_0^t\exp\l(2\int_0^rK(u)d u\r)\,d r}
    \,\rho_0(x,y),\quad s\in [0,t].
\end{align*}
Integrating over $s$ from 0 and $t$, we get
\begin{align}
  (P_{0,t}^{\varrho}f)^p(x)&\leq \E^y\l[\l(\exp\l(-\int_0^t\varrho_r(X_r)\,d r\r)f(X_t)\r)^p\r]\notag\\
                           &\quad\times\exp\l(\frac{p\rho_0(x,y)^2
                             }{4(p-1)\displaystyle\int_0^t\exp\l(2\int_0^rK(u)\,d u\r)\,d r}\r. \notag \\
                           &\qquad\qquad+\l.\frac{\displaystyle p\int_0^t\int_0^s\kappa(r)\exp\l(2\int_0^sK(u)d
                             u-\int_0^rK(u)\,d u\r) \,d r\,ds}{\displaystyle\int_0^t\exp\l(2\int_0^rK(u)\,d u\r)\,d r}\,\rho_0(x,y)\r)\notag\\
                           &\leq   \E^y\l[\exp\l(-\int_0^t\varrho_r(X_r)\,d r\r)f(X_t)^p\r]\exp\l((p-1)\int_0^t\sup\varrho_r^-\,d r\r)\notag\\
                           &\quad\times\exp\l(\frac{p\rho_0(x,y)^2
                             }{4(p-1)\displaystyle\int_0^t\exp\l(2\int_0^rK(u)\,d u\r)\,d r}\r. \notag \\
                           &\qquad\qquad+\l.\frac{\displaystyle p\int_0^t\int_0^s\kappa(r)\exp\l(2\int_0^sK(u)\,d
                             u-\int_0^rK(u)\,d u\r)\,d r\,ds}{\displaystyle\int_0^t\exp\l(2\int_0^rK(u)\,d u\r)\,d r}\,\rho_0(x,y)\r). \label{harnack-ineq-1}
\end{align}
This proves part (i) of the theorem. In addition, by adopting in
\eqref{harnack-ineq-1} the estimate
\begin{align*}
  \E^y\l[\l(\exp\l(-\int_0^t\varrho_r(X_r)\,d r\r)f(X_t)\r)^p\r]
  \leq  (P_{0,t}^{\varrho}f^{p})(y)\,\E^y\l[\exp\l(-(p-1)\int_0^t\varrho_r(X_r)\,d r\r)\r],
\end{align*}
part (ii) of the theorem follows as well.
\end{proof}

\section{Heat kernel estimate}\label{heat-kernel-section}
Let $p$ be the fundamental solution of $L_t=\Delta_t-\nabla^t\phi_t$
in the sense that
\begin{align*}
  \left\{
  \begin{aligned}
    &\partial_sp(s,x;r,y)=-(\Delta_{s}-\nabla^s\phi_s)p(s,\newdot;r,y)(x), \\
    &\displaystyle\lim_{s\rightarrow r}p(s,x;r,y)=\delta_y(x).
  \end{aligned}
      \right.
\end{align*}
Note that $p$ is the density of $P_{s,t}(x,dy)$ with respect to
$\mu_t(dy)$, i.e.
\begin{align*}
  P_{s,t}f(x)&=\int p(s,x;t,y)f(y)\,\mu_t(dy)\\
             &=\int p(s,x;t,y)f(y)\e^{-\phi_t(y)}\vol_t(dy),
\end{align*}
where $\vol_t$ denotes the volume measure with respect to $g_t$. In
\cite{Ch17} the following Harnack inequality for the 2-parameter
semigroup $P_{s,t}$ has been derived; it can be seen as a special case
to Theorem \ref{lem1}.\goodbreak

\begin{theorem}\label{lem1}
  Suppose that
$$\Ric_t-\frac{1}{2}\partial_tg_t+\Hess_t(\phi_t)\geq K(t).$$
Then for any non-negative function $f\in \mathcal{B}_b(M)$ and
$0\leq s\leq t<T$,
\begin{align*}
  (P_{s,t}f)^p(x)\leq P_{s,t}f^p(y)\exp\l(\frac{p}{4(p-1)\alpha(s,t)}\rho_s^2(x,y)\r)
\end{align*}
where
\begin{align*}
  \alpha(s,t):=\int_s^t\exp{\l(2\int_s^rK(u)\,d u\r)}\, d r.
\end{align*}
\end{theorem}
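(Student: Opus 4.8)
The plan is to obtain Theorem \ref{lem1} as the special case $\varrho\equiv 0$ of the first Harnack inequality in Theorem \ref{Harnack-ineq}. Setting $\varrho_t\equiv 0$ we have $P_{s,t}^{\varrho}=P_{s,t}$, and the hypothesis $|d\varrho_t|\leq\kappa(t)$ holds with $\kappa\equiv 0$; consequently $\eta(s,t)=0$ and $\sup_M\varrho_r^-=0$ for all $r$. Substituting these into inequality \eqref{Harnack-ineq1} of Theorem \ref{Harnack-ineq}(i) leaves precisely
$$(P_{s,t}f)^p(x)\leq (P_{s,t}f^p)(y)\exp\Bigl(\frac{p\,\rho_s^2(x,y)}{4(p-1)\alpha(s,t)}\Bigr),$$
with exactly the same $\alpha(s,t)$, which is the assertion. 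So once Theorem \ref{Harnack-ineq} has been established nothing further is needed, and one records this here for later use in the heat kernel estimate.

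If instead one wants a self-contained argument, not invoking the Feynman--Kac apparatus of Section \ref{derivative-formula}, I would simply rerun the proof of Theorem \ref{Harnack-ineq} with all $\varrho$-terms deleted. Concretely: reduce to $s=0$ and to $f\in C^2(M)$ with compact support and $\inf_M f>0$; fix $x\neq y$, take the constant-speed $g_0$-geodesic $\gamma$ from $x$ to $y$, put $\nu_s=d\gamma_{h(s)}/ds$ with $h(s)=t\,\alpha(0,s)/\alpha(0,t)$ so that $h(0)=0$, $h(t)=t$, and set $y_s=\gamma_{h(s)}$, $\varphi(s)=\log P_{0,s}(P_{s,t}f)^p(y_s)$. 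It\^o's formula for $(P_{s,t}f(X_s))^p$, after localizing along the exhaustion $\zeta_k$ and letting $k\to\infty$, gives
$$\frac{\partial}{\partial s}P_{0,s}(P_{s,t}f)^p\geq p(p-1)\,P_{0,s}\bigl((P_{s,t}f)^{p-2}|\nabla^sP_{s,t}f|_s^2\bigr).$$
Combining this with the gradient estimate $|\nabla^sP_{s,t}f|_s\leq\exp(-\int_s^tK)\,P_{s,t}|\nabla^tf|_t$, which is Corollary \ref{est-gradient-ineq} with $\kappa\equiv 0$, together with the chain rule for $\langle\nabla^0P_{0,s}(P_{s,t}f)^p,\nu_s\rangle_0$, one bounds $\varphi'(s)$ below by $p(p-1)P_{0,s}\bigl((P_{s,t}f)^p|\nabla^s\log P_{s,t}f|_s^2\bigr)/P_{0,s}(P_{s,t}f)^p$ minus a term linear in $|\nabla^s\log P_{s,t}f|_s$; the elementary bound $aA^2+bA\geq-b^2/(4a)$ for $a>0$ then yields
$$\varphi'(s)\geq-\frac{p\,h'(s)^2}{4(p-1)t^2}\exp\Bigl(-2\int_0^sK(u)\,du\Bigr)\rho_0^2(x,y).$$
Since $h'(s)=t\exp(2\int_0^sK)/\alpha(0,t)$, integrating from $0$ to $t$, using $\varphi(0)=\log(P_{0,t}f)^p(x)$ and $\varphi(t)=\log P_{0,t}(f^p)(y)$, and exponentiating gives exactly the claimed inequality with $\alpha(0,t)=\alpha(s,t)\big|_{s=0}$, and then the general $s$ follows by time-translation.

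The only genuinely technical point, along either route, is the justification of the differentiation under the expectation, i.e.\ the passage to the limit $k\to\infty$ in the localization; this is precisely where $\inf_M f>0$ and non-explosivity of $(X_t)$ enter, and it is handled exactly as in the proof of Theorem \ref{Harnack-ineq}. I do not expect any obstacle beyond this, since the $\varrho$-free case is strictly simpler: the perturbation contributions $\kappa$, $\eta$ and $\sup_M\varrho^-$ all disappear.
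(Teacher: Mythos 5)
Your first route — specializing Theorem \ref{Harnack-ineq}(i) to the potential $\varrho\equiv 0$, so that $P_{s,t}^\varrho=P_{s,t}$, $\kappa\equiv 0$, hence $\eta(s,t)=0$ and $\sup\varrho_r^-=0$ — is exactly what the paper intends; the sentence preceding the theorem cites \cite{Ch17} and remarks it ``can be seen as a special case to Theorem \ref{lem1},'' which is evidently a typo for Theorem \ref{Harnack-ineq}. Your self-contained fallback is a correct paraphrase of that proof with the $\varrho$-terms deleted, so nothing is missing.
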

It is interesting to observe that Theorem \ref{lem1} for $P_{s,t}$,
along with the Harnack inequality \eqref{Harnack-ineq1} for
$P_{s,t}^{\varrho}$, will allow us to attain upper bounds for the heat
kernel $p$.

\begin{theorem}\label{Heat-kernel-estimate}
  Suppose that, for all $t\in [0,T[$,
  \begin{align*}
    \Ric_t-\frac{1}{2}\partial_tg_t+\Hess_t(\phi_t)\geq K_1(t), \quad \Ric_t+\frac{1}{2}\partial_tg_t+\Hess_t(\phi_t)\geq K_2(t).
  \end{align*}
  Furthermore suppose that $|d\varrho_t|\leq\kappa(t)<\infty$ and
  $\sup\varrho_t^+<\infty$ for each $t\in [0,T[$.
  Then the following heat kernel upper bound holds:
  \begin{align*}
    p(0,x;t,y)\leq \frac{\displaystyle\exp{\l(\frac{1}{2}\int_{0}^{t}\sup \varrho^+_u\, du+\frac{t}{4\alpha_1(0,t/2)}+\frac{t+4\eta_2(t/2,t)\sqrt{t}}{4\alpha_2(t/2,t)}\r)}}{\sqrt{\mu_0\l(B_0(x,\sqrt{t})\r)\mu_{t}\l(B_{t}(y,\sqrt{t})\r)}},
  \end{align*}
  where
  \begin{align}\label{Def:A123}\begin{split}
      &\alpha_1(0,t/2):=\int_0^{t/2}\exp\l(2\int_0^rK_1(u)\,d u\r)d r;\\
      &\alpha_2(t/2,t):=\int_{t/2}^{t}\exp\l(2\int_{t/2}^rK_2(u)\,d u\r)\,d r;\\
      &\eta_2(t/2,t):=\int_{t/2}^t\int_{t/2}^v\kappa(r)\exp\l(2\int_{t/2}^vK_2(u)\,d
      u-\int_{t/2}^rK_2(u)\,du\r) \, drdv.
    \end{split}
  \end{align}
\end{theorem}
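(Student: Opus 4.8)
The strategy is a standard two-sided Harnack argument: write the heat kernel at time $t$ as a double semigroup application split at the midpoint $t/2$, estimate the forward half $[0,t/2]$ using the Harnack inequality for $P_{s,t}$ (Theorem~\ref{lem1}) and the backward half $[t/2,t]$ using the Harnack inequality \eqref{Harnack-ineq1} for the conjugate semigroup $P_{s,t}^{\varrho}$, and finally convert pointwise kernel bounds into the claimed form via integration against the measures $\mu_0$ and $\mu_t$ over geodesic balls. The key bridge between the two semigroups is the duality identity \eqref{Eq:mut}, namely $\mu_s(P_{s,t}^{\varrho}f)=\mu_t(f)$, together with the fact that $p(0,x;t,y)$ is simultaneously the $\mu_t$-density of $P_{0,t}(x,\cdot)$ and — by symmetry of the kernel under the conjugate flow — the $\mu_0$-density of $P_{0,t}^{\varrho}(\cdot\,,y)$ viewed in the backward variable.

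\textbf{Step 1 (semigroup decomposition).}
I would first record the Chapman--Kolmogorov type relation
\begin{align*}
  p(0,x;t,y)=\int_M p(0,x;t/2,z)\,p(t/2,z;t,y)\,\mu_{t/2}(dz),
\end{align*}
and observe that $z\mapsto p(0,x;t/2,z)$ is (up to the reference measure) the forward heat kernel from $x$, while $z\mapsto p(t/2,z;t,y)$ is handled through the conjugate semigroup: for fixed $y$, the function $(s,z)\mapsto p(s,z;t,y)$ solves the conjugate equation \eqref{Eq:HeatEqu}, so $p(t/2,\newdot;t,y)=P_{t/2,t}^{\varrho}\big(p(t,\newdot;t,y)\big)$ in the appropriate distributional sense, and one applies \eqref{Harnack-ineq1} with $p=2$ to control $p(t/2,z;t,y)$ in terms of its average against $\mu_t$ near $y$.

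\textbf{Step 2 (two Harnack estimates at $p=2$).}
For the forward half, Theorem~\ref{lem1} with $p=2$, $s=0$, $t$ replaced by $t/2$, gives for any $z,z'$
\begin{align*}
  p(0,x;t/2,z)^2\leq p(0,x;t/2,z')^2\,\exp\!\Big(\tfrac{1}{2\alpha_1(0,t/2)}\rho_0^2(z,z')\Big),
\end{align*}
which after integrating $z'$ over the ball $B_0(x,\sqrt t)$ and using $\int p(0,x;t/2,z')\,\mu_{t/2}(dz')\le 1$ bounds $p(0,x;t/2,z)$ for $z$ near $x$. For the backward half, \eqref{Harnack-ineq1} with $p=2$, lower bound $K_2$, kernels $\alpha_2,\eta_2$ as in \eqref{Def:A123}, and the crude bound $\int_{t/2}^t\sup\varrho_r^-\,dr$ replaced by $\tfrac12\int_0^t\sup\varrho_u^+\,du$ after sign bookkeeping, yields the factor $\exp\big(\tfrac{t}{4\alpha_2(t/2,t)}+\tfrac{\eta_2(t/2,t)\sqrt t}{\alpha_2(t/2,t)}\big)$ once $\rho_{t/2}(z,y)\le\sqrt t$; here one also uses that on a geodesic ball of radius $\sqrt t$ one has $\rho_s^2\le t$ and $\rho_s\le\sqrt t$. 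Multiplying the two estimates, integrating the product identity of Step~1 over $z\in B_{t/2}(x,\sqrt t)\cap B_{t/2}(y,\sqrt t)$, and absorbing the constants gives the bound with $\mu_{t/2}$-balls in the denominator; a final comparison of $\mu_{t/2}\big(B_{t/2}(\cdot,\sqrt t)\big)$ with the geometric mean of $\mu_0\big(B_0(x,\sqrt t)\big)$ and $\mu_t\big(B_t(y,\sqrt t)\big)$ — using the volume/measure distortion controlled by the same curvature bounds $K_1,K_2$ that enter $\alpha_1,\alpha_2$ — produces exactly the stated right-hand side.

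\textbf{Main obstacle.}
The delicate point is Step~3, the passage from $\mu_{t/2}$-balls to the symmetric product $\sqrt{\mu_0(B_0(x,\sqrt t))\,\mu_t(B_t(y,\sqrt t))}$: this requires that integrating the lower Harnack bound for $p(0,x;t/2,\cdot)$ against $\mu_0$ over $B_0(x,\sqrt t)$ (rather than against $\mu_{t/2}$ over $B_{t/2}(x,\sqrt t)$) still yields a lower bound of the correct order, which forces one to track how the metrics $g_0,g_{t/2},g_t$ and the densities $\e^{-\phi_s}$ are comparable on these balls. The cleanest route is to avoid an explicit volume comparison altogether: apply the \emph{same} Harnack inequality a second time, once anchored at time $0$ to pull $p(0,x;t/2,z)$ down to an average over $B_0(x,\sqrt t)$ against $\mu_0$ via the identity $\int_M p(0,x;t/2,z)\,\mu_{t/2}(dz)=1$ rewritten as $\mu_0\text{-mass}$ using \eqref{Eq:mut} with $f=\mathbf 1$, and symmetrically for $p(t/2,z;t,y)$ against $\mu_t$ over $B_t(y,\sqrt t)$ using $\mu_{t/2}(P_{t/2,t}^{\varrho}\mathbf 1)=\mu_t(\mathbf 1)$. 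This makes the two balls in the final denominator appear naturally, one measured by $\mu_0$ and one by $\mu_t$, and the only remaining cost is the sup of the curvature-type quantities over $[0,t]$, which is exactly what the hypotheses supply.
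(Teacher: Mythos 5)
Your proposal correctly spots two of the three ingredients --- the midpoint split at $t/2$, and the need for both the forward Harnack inequality of Theorem~\ref{lem1} and the conjugate Harnack~\eqref{Harnack-ineq1} via the adjoint kernel --- but it misses the step that makes the argument work, and the detailed estimates in Step~2 do not follow from the cited results. The paper's first move after the Chapman--Kolmogorov identity is Cauchy--Schwarz:
\begin{align*}
p(0,x;t,y)\leq\Big(\int_M p(0,x;t/2,z)^2\,\mu_{t/2}(dz)\Big)^{1/2}\Big(\int_M p(t/2,z;t,y)^2\,\mu_{t/2}(dz)\Big)^{1/2}=:\sqrt{I_1I_2}.
\end{align*}
This reduces everything to two weighted $L^2$-norms of the kernel and entirely avoids the pointwise kernel bounds and the cross-time ball comparison you flag as the ``main obstacle''; in the paper that obstacle simply never arises. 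The $L^2$-norms are then controlled by integrating the Harnack inequality, not by pointwise estimates: write Theorem~\ref{lem1} at $p=2$ as $(P_{0,t/2}f)^2(x)\exp\big(-\rho_0^2(x,y)/(2\alpha_1)\big)\leq(P_{0,t/2}f^2)(y)$, integrate over $y$ against $\mu_0$ restricting the left side to $B_0(x,\sqrt t)$, bound $\mu_0(P_{0,t/2}f^2)\leq\exp\big(\int_0^{t/2}\sup\varrho^+\big)\,\mu_{t/2}(f^2)$ using~\eqref{Eq:mut}, and take $f=k\wedge p(0,x;t/2,\cdot)$, $k\to\infty$. This gives $I_1\leq\exp(\cdots)/\mu_0(B_0(x,\sqrt t))$; the symmetric argument with~\eqref{Harnack-ineq1} applied to the time-reversed semigroup generated by $L_{t-\cdot}+\varrho_{t-\cdot}$ gives $I_2\leq\exp(\cdots)/\mu_t(B_t(y,\sqrt t))$, and the two denominators appear with no comparison of measures at different times.

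The concrete gap in your Step~2 is that the Harnack inequality is applied in the wrong variable. Theorem~\ref{lem1} reads $(P_{s,t}f)^2(x)\leq(P_{s,t}f^2)(y)\exp\big(\rho_s^2(x,y)/(2\alpha)\big)$, a Harnack estimate in the \emph{initial} spatial slot $x$, $y$ (the first argument of the kernel). Your displayed estimate
\begin{align*}
p(0,x;t/2,z)^2\leq p(0,x;t/2,z')^2\exp\Big(\tfrac{1}{2\alpha_1(0,t/2)}\rho_0^2(z,z')\Big)
\end{align*}
is a Harnack estimate in the \emph{terminal} slot $z$, $z'$ with $x$ fixed. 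This does not follow from Theorem~\ref{lem1} (and the relevant distance would in any case be $\rho_{t/2}$, since $z$, $z'$ live at time $t/2$). A terminal-variable Harnack is precisely what the adjoint/conjugate machinery supplies, which is why the paper handles $I_2$ via $p^*_t$ and $\bar P^{\varrho}_{0,t/2}$ rather than by reusing Theorem~\ref{lem1} directly. Finally, the identity $\int_M p(0,x;t/2,z)\,\mu_{t/2}(dz)=1$ that your workaround relies on is mass conservation of the forward process; it is not~\eqref{Eq:mut} with $f=\mathbf 1$ (which instead gives $\mu_s(P^\varrho_{s,t}\mathbf 1)=\mu_t(\mathbf 1)$), and it cannot be used to trade a $\mu_{t/2}$-integral for a $\mu_0$-ball in the way you suggest.
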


\begin{proof}
  We first observe that
  \begin{align*}
    p(0,x;t,y)
    &=\int_M p\l(0,x; t/2,z\r)\,p\l(t/2,z; t,y\r)\,\mu_{t/2}(dz)\\
    &\leq \l(\int_M p\l(0,x; t/2, z\r)^2\mu_{t/2}(dz)\r)^{1/2}\l(\int_Mp\l(t/2,z; t,y\r)^2\mu_{t/2}(d z)\r)^{1/2}.
  \end{align*}
  Hence we are left with the task to estimate the following two terms:
  \begin{align*}
    &I_1=\int_M p\l(0,x; t/2, z\r)^2\,\mu_{t/2}(dz),\\
    &I_2=\int_Mp\l(t/2,z; t,y\r)^2\,\mu_{t/2}(d z).
  \end{align*}
  In order to estimate $I_1$ we proceed with Theorem \ref{lem1}. As
  \begin{align*}
    \Ric_t-\frac{1}{2}\partial_tg_t+\Hess_t(\phi_t)\geq K_1(t)
  \end{align*}
  for some $K_1\in C([0,T[)$, we get
  \begin{align*}
    &(P_{s,t}f)^2(x)\,\mu_s\l(B_s(x,\sqrt{2(t-s)})\r)\,\exp\left(-
      \frac{t-s}{\int_s^t\exp\l(2\int_s^rK_1(u)\,du\r)\,dr}\right)\\
    &\leq \int_M(P_{s,t}f)^2(x)\exp\l(-\frac{\rho_s^2(x,y)}{2\int_s^t\exp\l(2\int_s^rK_1(u)\,du\r)dr}\r)\,\mu_s(dy)\\
    &\leq
      \int_M (P_{s,t}f^{2})(y)\,\mu_s(dy)\\
    &\leq
      \exp\l(\int_s^t\sup \varrho^+(u,\cdot)\, du\r) \int_M f(y)^2\,\mu_t(dy).
  \end{align*}
  Taking
  \begin{align*}
    f(y):=\big(k\wedge p(s,x;t,y)\big),\quad y\in M,\ k\in\N,
  \end{align*}
  we obtain
  \begin{align*}
    \int_M\big(k\wedge p(s,x;t,y)\big)^2\,\mu_t(dy)\leq \frac{\exp\l(\displaystyle
    \int_s^t\sup \varrho^+(u,\cdot)\, du+\alpha_1(s,t)^{-1}(t-s)\r)}{\mu_s\l(B_s(x,\sqrt{2(t-s)})\r)},
  \end{align*}
  where
$$\alpha_1(s,t)=\int_s^t\exp\l(2\int_s^rK_1(u)\,du\r)\,dr.$$
Letting $k\rightarrow \infty$, we arrive at
\begin{align*}
  \int_M p(s,x; t,y)^2\,\mu_t(dy)\leq \frac{\exp\l(\displaystyle
  \int_s^t\sup \varrho^+(u,\cdot)\, du+\frac{t-s}{\alpha_1(s,t)}\r)}{\mu_s\l(B_s(x,\sqrt{2(t-s)})\r)}.
\end{align*}
This shows that
\begin{align}\label{Est:I1}
  I_1\leq \frac{\exp\l(\displaystyle
  \int_0^{t/2}\sup \varrho^+(u,\cdot)\, du+\frac{t}{2\alpha_1(0,t/2)}\r)}{\mu_0\l(B_0(x,\sqrt{t})\r)}.
\end{align}

To estimate the second term $I_2$, we write
\begin{align*}
  \int_M p\l(t/2,z; t, y\r)^2\mu_{t/2}(dz)=\int_Mp_{t}^*\l(0,y; t/2,z\r)^2\mu_{t/2}(d z)
\end{align*}
where $p_t^*(s,x;r,y):=p^*(t-s,x;t-r,y)$ and where $p^*$ denotes the
adjoint heat kernel to $p$. Recall that
$$\left\{
    \begin{aligned}
      &\partial_sp^*_t(s,x;r,y)=-(L_{t-s}+\varrho_{t-s})p^*_t(s,\newdot;r,y)(x), \\
      &\displaystyle\lim_{s\rightarrow r}p^*_t(s,x;r,y)=\delta_y(x).
    \end{aligned}
  \right.
$$
Let $\{\bar{P}_{r,s}^{\varrho}\}_{0\leq r\leq s\leq t}$ be the
semigroup generated by the operator
$L_{t-\bolddot}+\varrho_{t-\bolddot}$. By Theorem \ref{Harnack-ineq1},
this time using relying on the assumption,
$$\Ric_t+\frac{1}{2}\partial_tg_t+\Hess_t(\phi_t)\geq K_2(t),$$
we have
\begin{align*}
  (\bar{P}_{0,t/2}^{\varrho}f)^2(x)\leq (\bar{P}_{0,t/2}^{\varrho}f^2)(y)\exp{\l(\int_0^{t/2}\sup\varrho^+(t-s,\cdot)\,ds +\frac{\rho_t^2(x,y)}{2\alpha_2(t/2,t)}+\frac{2\eta_2(t/2,t)\rho_t(x,y)}{\alpha_2(t/2,t)}\r)},
\end{align*}
where
\begin{align*}
  \alpha_2(t/2,t)&=\int_0^{t/2}\exp\l(2\int_0^rK_2(t-u)\,d u\r)\,d r=\int_{t/2}^{t}\exp\l(2\int_{t/2}^rK_2(u)\,d u\r)\,d r;\\
  \eta_2(t/2,t)&=\int_0^{t/2}\int_0^v\kappa(r)\exp\l(2\int_0^vK_2(t-u)\,d
              u-\int_0^rK_2(t-u)\,du\r) \, drdv\\
            &=\int_{t/2}^t\int_{t/2}^v\kappa(r)\exp\l(2\int_{t/2}^vK_2(u)\,d
              u-\int_{t/2}^rK_2(u)\,du\r)\,drdv.
\end{align*}
By means of this formula, we can proceed as above to obtain
\begin{align*}
  &(\bar{P}_{0,t/2}^{\varrho}f)^2(x)\,\mu_{t}(B_{t}\l(x,\sqrt{t})\r)
    \exp{\l(-\int_0^{t/2}\sup\varrho^+(t-s,\cdot)\,ds -\frac{t}{2\alpha_2(t/2,t)}
    -\frac{2\eta_2(t/2,t)\sqrt{t}}{\alpha_2(t/2,t)}\r)}\\
  &\leq \int_M(\bar{P}_{0,t/2}^{\varrho}f)^2(x)\exp{\l(-\int_0^{t/2}\sup\varrho^+(t-s,\cdot)\,ds
    -\frac{\rho_t^2(x,y)}{2\alpha_2(t/2,t)}
    -\frac{2\eta_2(t/2,t)\rho_t(x,y)}{\alpha_2(t/2,t)}\r)}\,\mu_{t}(dy)\\
  &\leq
    \int_M (\bar{P}_{0,t/2}^{\varrho}f^{2})(y)\,\mu_{t}(dy)= \int_M f(y)^2\,\mu_{t/2}(dy).
\end{align*}
Thus taking $f(z):=p^*_{t}\l(0,y; t/2, z\r)\wedge k$ and letting
$k\rightarrow \infty$, we obtain
\begin{align}\label{Est:I2}
  &\int_M p^*_{t}\l(0,y; t/2, z\r)^2\mu_{t/2}(dz)
    \leq \frac{\exp\l(\displaystyle\int_{t/2}^{t}\sup \varrho^+(u,\cdot)\, du
    +\frac{t+4\eta_2(t/2,t)\sqrt t}{2\alpha_2(t/2,t)}\r)}{\mu_{t}\big(B_{t}(y,\sqrt{t})\big)}.
\end{align}
Finally, combining \eqref{Est:I1} and \eqref{Est:I2} we obtain
\begin{align*}
  p(0,x;t,y)\leq\sqrt{I_1I_2}
  \leq\frac{\displaystyle\exp{\l(\frac12\int_{0}^{t}\sup \varrho^+(u,\cdot)\, du+\frac{t}{4\alpha_1(0,t/2)}+\frac{t+4\eta_2(t/2,t)\sqrt{t}}{4\alpha_2(t/2,t)}\r)}}{\sqrt{\mu_0\big(B_0(x,\sqrt{t})\big)\,\mu_{t}\big(B_{t}(y,\sqrt{t})\big)}}
\end{align*}
where the functions $\alpha_1,\alpha_2,\eta_2$ are defined by \eqref{Def:A123}.
\end{proof}

\begin{remark}\label{rem-heat-kernel}
  In \cite{Coulibaly-Pasquier2019}, the author used a horizontal
  coupling of curves to obtain a dimension-free Harnack inequality for
  $P_{s,t}$ generated by $\Delta_t$, first introduced by Wang, and
  applied it then for an upper bound of the heat kernel. A major
  difference to our approach when $\phi=0$ is that we use the Harnack
  inequality again to deal with the term $I_2$, while in
  \cite{Coulibaly-Pasquier2019} a comparison result for $P_{s,t}$ and
  $P_{s,t}^{\varrho}$ is used so that the condition there includes
  both upper and lower bounds of $\varrho$.

  Gaussian upper bounds for the heat kernel on evolving manifolds have
  recently also obtained by Buzano and Yudowitz
  \cite{2020:Buzano_Yudowitz}.  Their bounds depend on the behaviour
  of the distance function along the flow rather than directly
  involving curvature bounds.
\end{remark}

We now specify our estimates in some specific situations.  First we
consider the modified geometric flow for $g_t$ combined with the
conjugate heat equation for $\phi$, i.e.,
\begin{equation}\label{modified-geoflow-1}
  \left\{
    \begin{aligned} ^{\mathstrut}
      \partial_tg(x,t)&=-2\big(h+\Hess(\phi)\big)(x,t); \\
      \partial_t \phi_t(x)&=-\Delta_t\phi_t(x)-\sH(x,t).
    \end{aligned}
  \right.
\end{equation}
For this modified geometric flow, we have the following result.
\begin{corollary}\label{cor1}
  Suppose that $(g_t, \phi_t)$ evolve by
  Eq.~\eqref{modified-geoflow-1} and that
  \begin{align*}
    \Ric_t-\frac{1}{2}\partial_tg_t+\Hess_t(\phi_t)\geq K_1(t), \quad
    \Ric_t+\frac{1}{2}\partial_tg_t+\Hess_t(\phi_t)\geq K_2(t).
  \end{align*}
  Then
  \begin{align*}
    p(0,x;t,y)\leq \frac{\displaystyle\exp{\l( \frac{t}{4\alpha_1(0,t/2)}+\frac{t}{4\alpha_2(t/2,t)}\r)}}{\sqrt{\mu_0(B_0(x,\sqrt{t}))\mu_{t}(B_{t}(y,\sqrt{t}))}},
  \end{align*}
  where
  \begin{align*}
    \alpha_1(0,t/2)&:=\int_0^{t/2}\exp\l(2\int_0^rK_1(u)\,d u\r)d r;\\
    \alpha_2(t/2,t)&:=\int_{t/2}^{t}\exp\l(2\int_{t/2}^rK_2(u)\,d u\r)\,d r.
  \end{align*}
\end{corollary}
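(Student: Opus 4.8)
The plan is to derive Corollary~\ref{cor1} from Theorem~\ref{Heat-kernel-estimate} by showing that, for the modified system~\eqref{modified-geoflow-1}, the density weight $\varrho$ vanishes identically. First I would compute $\partial_t\mu_t$ along the flow. Since $\mu_t=\e^{-\phi_t}\vol_t$ and $\partial_t\vol_t=\tfrac12\tr_{g_t}(\partial_tg_t)\,\vol_t$, the first line of~\eqref{modified-geoflow-1} gives $\tr_{g_t}(\partial_tg_t)=-2\big(\sH_t+\Delta_t\phi_t\big)$, hence $\partial_t\vol_t=-(\sH_t+\Delta_t\phi_t)\vol_t$ and
\[
  \partial_t\mu_t=-\big(\partial_t\phi_t+\sH_t+\Delta_t\phi_t\big)\mu_t .
\]
Inserting the second line $\partial_t\phi_t=-\Delta_t\phi_t-\sH_t$ into the bracket, all terms cancel, so $\partial_t\mu_t=0$. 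Comparing with the defining relation $\partial_t\mu_t=-\varrho_t\mu_t$ yields $\varrho_t\equiv 0$; equivalently, the weighted volume is preserved along~\eqref{modified-geoflow-1}, which is the familiar feature of Perelman's modified flow.

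Granted $\varrho\equiv 0$, the corollary becomes a direct specialization of Theorem~\ref{Heat-kernel-estimate}. The two curvature lower bounds are assumed outright, and from $\varrho\equiv 0$ we get $|d\varrho_t|\leq\kappa(t)$ with $\kappa\equiv 0$ and $\sup\varrho_t^+=0<\infty$, so all hypotheses of Theorem~\ref{Heat-kernel-estimate} are met. With $\kappa\equiv 0$ the quantity $\eta_2(t/2,t)$ in~\eqref{Def:A123} vanishes, and with $\sup\varrho^+\equiv 0$ the term $\tfrac12\int_0^t\sup\varrho_u^+\,du$ drops out of the exponent; the functions $\alpha_1,\alpha_2$ of~\eqref{Def:A123} are already the ones appearing in the statement. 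What is left is precisely
\[
  p(0,x;t,y)\leq\frac{\exp\!\big(\tfrac{t}{4\alpha_1(0,t/2)}+\tfrac{t}{4\alpha_2(t/2,t)}\big)}{\sqrt{\mu_0(B_0(x,\sqrt t))\,\mu_t(B_t(y,\sqrt t))}},
\]
which is the claim.

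There is no serious analytic difficulty here; the proof is a one-line computation followed by an application of the previous theorem. The one point deserving care, which I would spell out, is the bookkeeping for $\varrho$: in the general framework $\varrho$ is tied to the given metric evolution through $\sH=\tr_{g_t}h_t$, so for~\eqref{modified-geoflow-1} one must take the trace of the \emph{effective} tensor $h+\Hess(\phi)$, namely $\sH_t+\Delta_t\phi_t$, rather than $\sH_t$ alone, when identifying $\varrho_t$. It is exactly this additional $\Delta_t\phi_t$ that is annihilated by the evolution equation for $\phi_t$. I would also remark that, since $\varrho\equiv 0$, the semigroup $P_{s,t}^\varrho$ coincides with $P_{s,t}$ and $\mu_0$ is genuinely $P_{s,t}$-invariant, in agreement with~\eqref{Eq:mut}.
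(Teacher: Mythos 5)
Your proposal is correct and matches the paper's proof in substance: both reduce the corollary to Theorem~\ref{Heat-kernel-estimate} by observing that for the modified flow~\eqref{modified-geoflow-1} the effective trace is $\tr_{g_t}(h_t+\Hess_t\phi_t)=\sH_t+\Delta_t\phi_t$, which combined with $\partial_t\phi_t=-\Delta_t\phi_t-\sH_t$ forces $\varrho_t\equiv 0$, hence $\kappa\equiv 0$, $\eta_2\equiv 0$, and $\sup\varrho^+\equiv 0$. The paper plugs directly into the definition of $\varrho$ while you derive it from $\partial_t\mu_t$, but this is only a stylistic difference in bookkeeping.
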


\begin{proof}
  It is immediate from the definition of $\varrho$ that
  \begin{align*}
    \varrho_t&=\partial_t\phi_t+\tr_{g_t}(h_t+\Hess_t(\phi_t))\\
             &=\partial_t\phi_t+\Delta_t\phi_t+\sH_t=0.
  \end{align*}
  The proof is hence completed by applying Theorem
  \ref{Heat-kernel-estimate}.
\end{proof}

We next consider the standard geometric flow for the evolution of the
metric $g$, i.e.,
\begin{align}\label{geoflow-2}
  \left\{
  \begin{array}{ll}
    \partial_tg(x,\cdot)(t)=-2h(x,t); \\
    \phi_t(x)=0.
  \end{array}
  \right.
\end{align}
For this geometric flow, we have $\varrho=\mathcal{H}$ and we thus
obtain the following result.

\begin{corollary}\label{cor-2}
  Suppose that $g_t$ evolves by Eq.~\eqref{geoflow-2}.  Furthermore
  assume that
  \begin{align*}
    \Ric_t-\frac{1}{2}\partial_tg_t\geq K_1(t), \quad \Ric_t+\frac{1}{2}\partial_tg_t\geq K_2(t),
  \end{align*}
  as well as $|d\mathcal{H}_t|\leq\kappa(t)<\infty$ and
  $\sup\mathcal{H}_t^+<\infty$ for each $t\in [0,T[$.  Then
  \begin{align*}
    p(0,x;t,y)\leq \frac{\displaystyle\exp{\l(\frac12\int_{0}^{t}\sup \mathcal{H}^+_u\, du+\frac{t}{4\alpha_1(0,t/2)}+\frac{t+4\eta_2(t/2,t)\sqrt{t}}{4\alpha_2(t/2,t)}\r)}}{\sqrt{\mu_0(B_0(x,\sqrt{t}))\mu_{t}(B_{t}(y,\sqrt{t}))}},
  \end{align*}
  where
  \begin{align*}
    &\alpha_1(0,t/2):=\int_0^{t/2}\exp\l(2\int_0^rK_1(u)d u\r)d r;\\
    &\alpha_2(t/2,t):=\int_{t/2}^{t}\exp\l(2\int_{t/2}^rK_2(u)d u\r)d r;\\
    &\eta_2(t/2,t):=\int_{t/2}^t\int_{t/2}^v\kappa(r)\exp\l(2\int_{t/2}^vK_2(u)\,d
      u-\int_{t/2}^rK_2(u)\,du\r) \, drdv.
  \end{align*}
\end{corollary}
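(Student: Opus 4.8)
The plan is to obtain Corollary~\ref{cor-2} as an immediate specialization of Theorem~\ref{Heat-kernel-estimate} to the potential-free case $\phi_t\equiv 0$ governed by~\eqref{geoflow-2}. The only work is to check that, in this case, both the hypotheses and the conclusion of Theorem~\ref{Heat-kernel-estimate} collapse exactly to the statement asserted here.

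First I would record the elementary identities that hold when $\phi_t\equiv 0$ and $\partial_tg_t=-2h_t$. Then $\Hess_t(\phi_t)=0$, so the two curvature conditions of Theorem~\ref{Heat-kernel-estimate} become precisely $\Ric_t-\frac12\partial_tg_t\geq K_1(t)$ and $\Ric_t+\frac12\partial_tg_t\geq K_2(t)$. Moreover $\varrho_t=\partial_t\phi_t+\sH_t=\sH_t=\mathcal H_t$, whence $|d\varrho_t|=|d\mathcal H_t|\leq\kappa(t)<\infty$ and $\sup\varrho_t^+=\sup\mathcal H_t^+<\infty$; thus the remaining two hypotheses of Theorem~\ref{Heat-kernel-estimate} are exactly the ones assumed here. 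Likewise $L_t=\Delta_t$ and $\mu_t=\vol_t$, with the standing assumptions of Section~\ref{derivative-formula} (non-explosion of the $L_t$-diffusion and the one-sided bounds on~$\varrho_t$) understood throughout.

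Next I would simply substitute $\varrho=\mathcal H$ into the conclusion of Theorem~\ref{Heat-kernel-estimate}. The functions $\alpha_1(0,t/2)$, $\alpha_2(t/2,t)$ and $\eta_2(t/2,t)$ are defined there by the same formulae~\eqref{Def:A123} as in the present statement, so the displayed heat-kernel bound of Theorem~\ref{Heat-kernel-estimate} reads verbatim as the asserted inequality, with $\sup\varrho^+_u$ replaced throughout by $\sup\mathcal H^+_u$. This completes the argument.

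There is no genuine obstacle here: the substance of Theorem~\ref{Heat-kernel-estimate} — splitting $p(0,x;t,y)$ at the midpoint $t/2$, applying the Harnack inequality of Theorem~\ref{lem1} for $P_{s,t}$ to control $I_1$, the Harnack inequality~\eqref{Harnack-ineq1} for $\bar P_{s,t}^{\varrho}$ to control $I_2$, and then inserting on-diagonal truncations $k\wedge p$ and letting $k\to\infty$ — is already established. The only new point, and it is a trivial bookkeeping one, is that in the absence of a drift potential the conjugate weight $\varrho$ driving the Feynman–Kac factor is exactly the metric trace $\mathcal H=\tr_{g_t}h_t$ of the tensor generating the flow, so that all curvature and integrability hypotheses are expressed directly in terms of $h_t$ and $\mathcal H_t$.
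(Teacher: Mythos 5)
Your proposal is correct and matches the paper's approach exactly: the paper simply observes that under~\eqref{geoflow-2} one has $\phi_t\equiv 0$, hence $\Hess_t(\phi_t)=0$ and $\varrho=\mathcal H$, and then invokes Theorem~\ref{Heat-kernel-estimate} verbatim. Your additional bookkeeping (checking that each hypothesis of Theorem~\ref{Heat-kernel-estimate} specializes to the stated one and that the formulas~\eqref{Def:A123} are literally reproduced) is the same substitution, just spelled out more fully.
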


\section{Super log-Sobolev inequalities}\label{log-sobolev-section}
The semigroup $P_{s,t}^{\varrho}$ is called supercontractive if it
maps $L^p(M,\mu_t)$ into $L^q(M,\mu_s)$, i.e.,
\begin{align*}
  \|P_{s,t}^{\varrho}\|_{(p,t)\rightarrow (q,s)}<\infty,
\end{align*}
for any $1<p<q<+\infty$ and $0\leq s \leq t<T$.  In the following
section, we investigate the relation between supercontractivity of
$P_{s,t}^{\varrho}$ and a log-Sobolev inequality with respect to
$\mu_t$.

\subsection{Supercontractivity of $P^{\varrho}_{s,t}$ and Super log-Sobolev inequality }\label{generalflow-section}
We state first the main result of this section.

\begin{theorem}\label{log-S-superbound}
  Assume that $\varrho_t$ is bounded,
  \begin{align*}
    &\Ric_t+\Hess_t(\phi_t)-\frac{1}{2}\partial_tg_t\geq K(t),
      \quad |d \varrho_t|\leq\kappa(t) \quad \mbox{for }\  t\in [0,T[,
  \end{align*}
  and that
$$\|P_{s,t}^{\varrho}\|_{(p,t)\rightarrow (q,s)}< \infty,\quad\text{for $1<p<q\ $ and $\ 0\leq s\leq t<T$.}$$
Then, for every $f\in H^1(M,\mu_t)$ such that $\|f\|_{2,t}=1$,
$t\in [0,T[$, the following super log-Sobolev inequalities hold:
\begin{align}\label{log-S-I}
  \int f^2\log f^2\, d \mu_t\leq  r\, \int |\nabla^tf|_t^2\,d\mu_t+{\beta}_t(r),\quad r>0,
\end{align}
where
\begin{align*}
  \beta_t(r)=\tilde{\beta}_t(\gamma_t^{-1}(r),t)
\end{align*}
and
\begin{align*}
  &\tilde{\beta}_t(s,t)=\frac{pq}{q-p}\log\l(\|P_{s,t}^{\varrho}\|_{(p,t)\rightarrow (q,s)}\r)+\int_{s}^t\l(2  \l(\int_r^t \kappa(u)
    \exp\l(-\int_r^uK(v)\,dv\r) du\r)^2+\sup \varrho_r^{+}\r)\,dr,\\
  &\gamma(s,t)=\frac{4p(q-1)}{q-p}\int_{s}^t\exp\left(-2\int_r^tK(u)\,d u\right)\,d r,\notag\\
  &\gamma^{-1}_t(r)=\inf\l\{s\in[0,t]\colon\gamma(s,t)\leq r\r\}. \notag
\end{align*}
\end{theorem}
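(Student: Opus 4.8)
The plan is to derive the super log-Sobolev inequality \eqref{log-S-I} from the supercontractivity bound $\|P_{s,t}^{\varrho}\|_{(p,t)\to(q,s)}<\infty$ together with the gradient estimate of Corollary~\ref{est-gradient-ineq}, following the classical route of Davies--Simon / Gross that links hypercontractivity to logarithmic Sobolev inequalities, but now adapted to the two-parameter setting and to the volume-type reference measures $\mu_t$. The central identity is \eqref{Eq:mut}, $\mu_s(P_{s,t}^\varrho f)=\mu_t(f)$, which plays the role that invariance of the reference measure plays in the static case; it guarantees that $P_{s,t}^\varrho$ acts reasonably between the $L^p(\mu_t)$ scales.

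First I would fix $t\in[0,T[$ and $f\in H^1(M,\mu_t)$ with $\|f\|_{2,t}=1$, and for $s\in[0,t]$ consider the function $s\mapsto \|P_{s,t}^\varrho f\|_{q(s),s}$ along a suitable time-dependent exponent $q(s)$ with $q(t)=2$. Differentiating $\log\|P_{s,t}^\varrho f\|_{q(s),s}$ in $s$ produces three contributions: the term from $\dot q(s)$, which yields the entropy $\int |g|^{q}\log|g|^{q}\,d\mu_s$ (here $g=P_{s,t}^\varrho f$ suitably normalized); the term from $\partial_s P_{s,t}^\varrho f=-(L_s-\varrho_s)P_{s,t}^\varrho f$ via \eqref{Eq:HeatEqu}, which after integrating by parts against $\mu_s$ produces a Dirichlet-form term $-\int |\nabla^s |g|^{q/2}|_s^2\,d\mu_s$ plus a $\varrho_s$-contribution; and a term coming from the time-dependence of the measure $\mu_s$, controlled by $\partial_s\mu_s=-\varrho_s\mu_s$. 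The key analytic input is then the gradient estimate of Corollary~\ref{est-gradient-ineq}, which under the hypothesis $\Ric_t+\Hess_t(\phi_t)-\tfrac12\partial_tg_t\ge K(t)$ and $|d\varrho_t|\le\kappa(t)$ lets us bound $|\nabla^s P_{s,t}^\varrho f|_s$ in terms of $P_{s,t}^\varrho|\nabla^t f|_t$ and a remainder involving $\int_s^t\kappa(u)\exp(-\int_s^uK(v)\,dv)\,du$; this converts the Dirichlet term at time $s$ into the Dirichlet term at time $t$ that appears in \eqref{log-S-I}, while the $\kappa$-remainder generates the $2\bigl(\int_r^t\kappa(u)e^{-\int_r^uK(v)dv}du\bigr)^2$ term in $\tilde\beta_t$ and the $\sup\varrho_r^+$ term absorbs the Feynman--Kac potential.

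The role of the exponents is bookkeeping: the contraction $\|P_{s,t}^\varrho\|_{(p,t)\to(q,s)}<\infty$ is used to start the flow at time $s$ from the $L^p(\mu_t)$ end, and the precise choice of the interpolation path $q(\cdot)$ between $p$ and $q$ is what produces $\gamma(s,t)=\frac{4p(q-1)}{q-p}\int_s^t e^{-2\int_r^tK(u)du}\,dr$ as the total "effective time" and the prefactor $\frac{pq}{q-p}\log\|P_{s,t}^\varrho\|_{(p,t)\to(q,s)}$ in $\tilde\beta_t$. Integrating the differential inequality for $\log\|P_{s,t}^\varrho f\|_{q(s),s}$ from $s$ to $t$, and then optimizing over $s$ (equivalently, inverting $r=\gamma(s,t)$ via $\gamma_t^{-1}(r)=\inf\{s:\gamma(s,t)\le r\}$) yields exactly $\beta_t(r)=\tilde\beta_t(\gamma_t^{-1}(r),t)$ and the stated inequality. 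A density/truncation argument reduces to $f\in C_0^\infty$ with $\inf f>0$ so that all manipulations with $\log$ and with the gradient estimate are legitimate, and then one passes to general $f\in H^1(M,\mu_t)$.

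The main obstacle I expect is the careful treatment of the time-dependence of both the generator and the reference measure $\mu_s$ when differentiating $\|P_{s,t}^\varrho f\|_{q(s),s}$: unlike the static case there is no single invariant measure, so one must track the extra $-\varrho_s$ terms coming from $\partial_s\mu_s$ and check that they combine correctly with the Feynman--Kac potential in $P_{s,t}^\varrho$ rather than producing an uncontrolled error; in particular one needs $\varrho_t$ bounded (as assumed) so that $\sup\varrho_r^+$ is finite and the cross terms between $\nabla^s$ and $\nabla^t$ (mediated by $Q_{s,r}$ and parallel transport) are dominated by the $\kappa$-integral. A secondary technical point is justifying the differentiation under the expectation/integral and the integration by parts on the possibly non-compact, curvature-blow-up-prone manifold; here one works up to the stopping times $\zeta_k$ of \eqref{zeta-n}, uses non-explosion of $(X_t)$, and the integrability hypotheses to pass to the limit.
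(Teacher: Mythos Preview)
Your outline runs the Gross differentiation of $s\mapsto\|P_{s,t}^\varrho f\|_{q(s),s}$, but this is the route the paper uses for the \emph{converse} implication (Theorem~\ref{Sobolevineq-to-superbound}); in the direction you need it does not produce the right quantities. First a sign: since $\partial_sP_{s,t}^\varrho f=-(L_s-\varrho_s)P_{s,t}^\varrho f$, the $L_s$-contribution integrates by parts to $+q(q-1)\,\mu_s\big((P_{s,t}^\varrho f)^{q-2}|\nabla^sP_{s,t}^\varrho f|_s^2\big)$ with a \emph{positive} sign, so inserting the \emph{upper} bound from Corollary~\ref{est-gradient-ineq} points the wrong way. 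More seriously, even granting the inequality, the gradient estimate produces $(P_{r,t}^\varrho|\nabla^t f|_t)^2$ sitting under $\int_s^t\mu_r(\,\cdot\,(P_{r,t}^\varrho f)^{q(r)-2})\,dr$, and no use of \eqref{Eq:mut} turns this into the single quantity $\mu_t(|\nabla^t f|_t^2)$; the entropy terms suffer the same defect, being entropies of $P_{r,t}^\varrho f$ in $L^{q(r)}(\mu_r)$ rather than of $f$ in $L^2(\mu_t)$. So the ``conversion of the Dirichlet term at time $s$ into the Dirichlet term at time $t$'' does not go through as stated.

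The paper instead argues in two separate steps. Step one is Proposition~\ref{th1}: apply It\^o's formula to $r\mapsto e^{-\int_s^r\varrho_u(X_u)\,du}(P_{r,t}^\varrho f^2)\log(P_{r,t}^\varrho f^2)(X_r)$ along the diffusion, use the gradient estimate together with $P_{r,t}^\varrho(f|\nabla^t f|_t)\le\sqrt{(P_{r,t}^\varrho f^2)(P_{r,t}^\varrho|\nabla^t f|_t^2)}$, and take expectations; the point is that the Dirichlet contribution then appears \emph{linearly} as $P_{s,t}^\varrho|\nabla^t f|_t^2$, so integrating the resulting pointwise inequality against $\mu_s$ and invoking \eqref{Eq:mut} gives $\mu_t(f^2\log f^2)\le \gamma_0(s,t)\,\mu_t(|\nabla^t f|_t^2)+\mu_s(P_{s,t}^\varrho f^2\log P_{s,t}^\varrho f^2)+(\cdots)$ directly. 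Step two handles the leftover term $\mu_s(P_{s,t}^\varrho f^2\log P_{s,t}^\varrho f^2)$ not by any norm differentiation in $s$, but by Riesz--Thorin interpolation of $P_{s,t}^\varrho$ between $(1,t)\!\to\!(1,s)$ (norm $\le1$) and $(p,t)\!\to\!(q,s)$, differentiating in the interpolation parameter $h$ at $h=0$; this yields
\[
\mu_s\big(P_{s,t}^\varrho f^2\log P_{s,t}^\varrho f^2\big)\ \le\ \tfrac{q(p-1)}{p(q-1)}\,\mu_t(f^2\log f^2)+\tfrac{q}{q-1}\log\|P_{s,t}^\varrho\|_{(p,t)\to(q,s)},
\]
and since $\tfrac{q(p-1)}{p(q-1)}<1$ the entropy is absorbed on the left, giving exactly the constants $\gamma(s,t)$ and $\tilde\beta(s,t)$. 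Neither the It\^o-along-the-diffusion step (which is what makes the Dirichlet term come out linearly) nor the Riesz--Thorin interpolation appears in your plan; these are the two missing ideas.
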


The log-Sobolev inequality \eqref{log-S-I} has been shown to be
equivalent to the Sobolev inequality and can hence be used to obtain
an upper bound estimate of the heat kernel, see e.g.~\cite{Zh12} for
details.  In order to prove Theorem \ref{log-S-superbound}, we first
establish a log-Sobolev inequality with respect to the semigroup
$P_{s,t}^{\varrho}$.

\begin{proposition}\label{th1}
  Assume that
  \begin{align*}
    &\Ric_t+\Hess_t(\phi_t)-\frac{1}{2}\partial_tg_t\geq K(t),\ \ \sup|\varrho_t|<\infty\
      \hbox{ and }\ |d \varrho_t|\leq\kappa(t) \ \text{ for }\ t\in [0,T[.
  \end{align*}
  Then, for $0\leq s\leq t< T$ and $f\in C_0^{\infty}(M)$,
  \begin{align*}
    P_{s,t}^{\varrho}(f^2\log f^2)&\leq 4\l(\int_s^t\exp\l(-2\int_r^tK(u)\, d u\r)\,d r\r)P_{s,t}^{\varrho}|\nabla^t f|_t^2+P_{s,t}^{\varrho}f^2\log P_{s,t}^{\varrho}f^2\\
                                  &\quad+\int_s^t\l(2  \l(\int_r^t \kappa(u)\exp\l(-\int_r^uK(v)\,dv\r)\, du\r)^2+\sup \varrho_r^{+}\r)\,dr\, P_{s,t}^{\varrho}f^2.\qedhere
  \end{align*}
\end{proposition}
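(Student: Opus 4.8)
The plan is to run a standard semigroup-interpolation (entropy-along-the-flow) argument, but now for the two-parameter Feynman–Kac semigroup $P^{\varrho}_{s,t}$, using the gradient estimate of Corollary~\ref{est-gradient-ineq} as the key quantitative input. Fix $x\in M$, fix $0\le s\le t<T$, and for $r\in[s,t]$ set
$$
\Lambda(r):=P^{\varrho}_{s,r}\Big((P^{\varrho}_{r,t}f)^2\log (P^{\varrho}_{r,t}f)^2\Big)(x).
$$
Then $\Lambda(s)=P^{\varrho}_{s,t}(f^2\log f^2)(x)$ and $\Lambda(t)=(P^{\varrho}_{s,t}f^2)(x)\,\log\big((P^{\varrho}_{s,t}f)^2(x)\big)$ up to the fact that $P^{\varrho}_{s,s}$ is the identity; so the quantity to bound is $\Lambda(s)-\Lambda(t)$, and I will get it by integrating $-\Lambda'(r)$ from $s$ to $t$. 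First I would differentiate: using Eq.~\eqref{Eq:HeatEqu}, the forward equation for $\partial_r P^{\varrho}_{s,r}$ (which is $\partial_r P^{\varrho}_{s,r}=P^{\varrho}_{s,r}(L_r-\varrho_r)$), and the pointwise identity $(L_r-\varrho_r)(\psi^2\log\psi^2)-(\log\psi^2+1)(L_r-\varrho_r)\psi^2 = 2|\nabla^r\psi|_r^2\cdot 2 = 4|\nabla^r\psi|_r^2$ valid when $(L_r-\varrho_r)\psi=-\partial_r\psi$ with $\psi=P^{\varrho}_{r,t}f$, the $\log$-and-first-order terms telescope and one is left with
$$
\Lambda'(r)=4\,P^{\varrho}_{s,r}\big(|\nabla^r P^{\varrho}_{r,t}f|_r^2\big)(x)+\text{(a remainder from the $\varrho$-killing)}.
$$
Care is needed because $\psi^2\log\psi^2$ is only $C^1$ near $\psi=0$; this is handled, as usual, by replacing $\log$ with $\log(\varepsilon+\cdot)$, doing the computation, and letting $\varepsilon\downarrow0$, using $f\in C_0^\infty(M)$ and boundedness of $\varrho$ to justify the limits and the differentiation under $P^{\varrho}$.

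The heart of the argument is then to bound $P^{\varrho}_{s,r}\big(|\nabla^r P^{\varrho}_{r,t}f|_r^2\big)$ by something controlled by $P^{\varrho}_{r,t}|\nabla^t f|_t^2$ and $P^{\varrho}_{r,t}f^2$. Here I apply Corollary~\ref{est-gradient-ineq} with $s$ replaced by $r$ and $f$ replaced by... more precisely, the corollary gives, for nonnegative $g$,
$$
|\nabla^r P^{\varrho}_{r,t}g|_r\le e^{-\int_r^t K}\,P^{\varrho}_{r,t}|\nabla^t g|_t+P^{\varrho}_{r,t}g\int_r^t\kappa(u)e^{-\int_r^u K(v)\,dv}\,du.
$$
Squaring, using $(a+b)^2\le (1+\delta)a^2+(1+\delta^{-1})b^2$ (or more simply $\le 2a^2+2b^2$, which is what the stated constants suggest), and using the contraction-type bound $P^{\varrho}_{r,t}|\nabla^t g|_t \le (P^{\varrho}_{r,t}|\nabla^t g|_t^2)^{1/2}\cdot(P^{\varrho}_{r,t}1)^{1/2}$ together with $P^{\varrho}_{r,t}1\le e^{\int_r^t\sup\varrho_u^- du}$ — actually, to land exactly on the stated estimate, I expect one should instead apply the gradient bound to $P^{p\varrho}$-type objects or absorb the extra factor into the $\sup\varrho^+$ term; the appearance of $\sup\varrho_r^+$ rather than $\sup\varrho_r^-$ in the final bound tells me the bookkeeping is arranged so that the $e^{-\int\varrho}$ weights are estimated from above by $e^{\int\varrho^+}$. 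After this, applying $P^{\varrho}_{s,r}$ to the squared inequality and using the semigroup property $P^{\varrho}_{s,r}P^{\varrho}_{r,t}=P^{\varrho}_{s,t}$, one obtains
$$
\Lambda'(r)\le 4e^{-2\int_r^t K}\,P^{\varrho}_{s,t}|\nabla^t f|_t^2(x)+\Big(2\big(\textstyle\int_r^t\kappa(u)e^{-\int_r^u K(v)\,dv}\,du\big)^2+\sup\varrho_r^+\Big)\,P^{\varrho}_{s,t}f^2(x),
$$
possibly after replacing $f$ by $f^2$'s square root where needed — note the corollary is stated for nonnegative $f$, and $|\nabla^t f|_t$ here should be read as $|\nabla^t|f|\,|_t=|\nabla^t f|_t$ a.e., which is fine for $f\in C_0^\infty$.

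Finally I integrate $-\Lambda'(r)$ over $r\in[s,t]$: the first term integrates to $4\big(\int_s^t e^{-2\int_r^t K(u)\,du}\,dr\big)\,P^{\varrho}_{s,t}|\nabla^t f|_t^2$, the $\kappa$/$\varrho^+$ term integrates to exactly the bracketed integral $\int_s^t(\cdots)\,dr$ multiplying $P^{\varrho}_{s,t}f^2$, and $\int_s^t(-\Lambda'(r))\,dr=\Lambda(s)-\Lambda(t)=P^{\varrho}_{s,t}(f^2\log f^2)-P^{\varrho}_{s,t}f^2\log P^{\varrho}_{s,t}f^2$; rearranging gives precisely the claimed inequality. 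The main obstacle I anticipate is the two technical points flagged above: (i) justifying the differentiation of $\Lambda$ and the $\varepsilon\downarrow0$ passage, which requires the uniform integrability afforded by $\sup|\varrho_t|<\infty$, $f\in C_0^\infty$, and the non-explosion assumption already standing in Section~\ref{derivative-formula}; and (ii) getting the constants to match exactly, in particular confirming that the extra mass factors $P^{\varrho}_{r,t}1$ produced by Cauchy–Schwarz are exactly what turn $\varrho^-$ into the $\sup\varrho^+$ appearing in $\tilde\beta$. Everything else is the by-now routine semigroup calculus, and the structural identity $\Lambda(s)-\Lambda(t)=\int_s^t 4P^{\varrho}_{s,r}(|\nabla^r P^{\varrho}_{r,t}f|_r^2)\,dr+\text{remainder}$ is the backbone.
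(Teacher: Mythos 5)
Your plan — interpolate an entropy-like quantity along the flow and feed in the gradient estimate of Corollary~\ref{est-gradient-ineq} — is the right idea and is the same mechanism the paper uses (the paper writes it probabilistically, via Itô's formula along the $L_r$-diffusion $X_r$, but that is only a cosmetic difference). However, there is a genuine gap in the way you set up the interpolating quantity, and it is not a bookkeeping issue.

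You define $\Lambda(r)=P^\varrho_{s,r}\big((P^\varrho_{r,t}f)^2\log(P^\varrho_{r,t}f)^2\big)$, i.e.\ you apply the Feynman--Kac kernel to $f$ and then square. The paper instead works with $G_r:=P^\varrho_{r,t}(f^2)$, the kernel applied to $f^2$, and interpolates $P^\varrho_{s,r}(G_r\log G_r)$. These are different objects, and only the second one works, for three interlocking reasons. First, only $G_r$ satisfies the backward equation $\partial_r G_r=-(L_r-\varrho_r)G_r$; the square $(P^\varrho_{r,t}f)^2$ does not (its $\partial_r$ picks up an extra $2|\nabla^r\psi_r|^2_r+\varrho_r\psi_r^2$), so the ``pointwise identity'' you invoke is not applicable to $\psi_r^2$. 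Second, carrying out the chain rule honestly for $\Phi(u)=u^2\log u^2$ applied to $\psi_r=P^\varrho_{r,t}f$ gives $\Phi''(\psi)|\nabla\psi|^2=(2\log\psi^2+6)|\nabla\psi|^2$, not $4|\nabla\psi|^2$; the clean $|\nabla G|^2/G$ (which is indeed $4|\nabla\psi|^2$ when $G=\psi^2$) arises only when one runs the chain rule for $G\log G$ in the variable $G$, which requires $G_r=P^\varrho_{r,t}(f^2)$ to be what evolves. Third, and decisively, at $r=s$ your $\Lambda(s)$ equals $(P^\varrho_{s,t}f)^2\log(P^\varrho_{s,t}f)^2$, not $P^\varrho_{s,t}(f^2)\log P^\varrho_{s,t}(f^2)$ as required by the claimed inequality; there is no free comparison between the two because $u\mapsto u\log u$ is not monotone near $0$ and $P^\varrho$ is only sub-Markovian, so Jensen does not bridge the gap. (Your stated endpoints $\Lambda(s)$, $\Lambda(t)$ are also swapped and internally inconsistent, mixing the two conventions.)

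A second, smaller point: the $\sup\varrho_r^+$ term in the final bound does not come from absorbing mass factors $P^\varrho_{r,t}1$ from Cauchy--Schwarz, as you speculate. Once you interpolate the correct quantity, the $\log$-terms in $(L_r+\partial_r)(G_r\log G_r)$ and in the exterior Feynman--Kac weight cancel exactly, leaving the residual $\varrho_r G_r$, which one simply bounds by $\sup\varrho_r^+\cdot G_r$. The Cauchy--Schwarz that does occur is the inner one, $P^\varrho_{r,t}(f|\nabla^t f|_t)\le\big(P^\varrho_{r,t}f^2\big)^{1/2}\big(P^\varrho_{r,t}|\nabla^t f|^2_t\big)^{1/2}$, after writing $|\nabla^t f^2|_t=2f|\nabla^t f|_t$; this produces no stray $P^\varrho_{r,t}1$ factor at all. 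If you replace $(P^\varrho_{r,t}f)^2$ by $P^\varrho_{r,t}(f^2)$ throughout, apply the gradient bound of Corollary~\ref{est-gradient-ineq} to $f^2$ rather than to $f$, and carry out the steps above, your argument becomes a correct (and essentially identical) reformulation of the paper's.
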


\begin{proof}
  Without loss of generality, we assume that $f\geq\delta$ for some
  $\delta>0$.  Otherwise, we may take
  $f_{\delta}=(f^2+\delta)^{{1}/{2}}$ and pass to the limit
  $\delta \downarrow 0$ to obtain the conclusion.

  Now consider the process
  $r\mapsto(P_{r,t}f^2)\log (P_{r,t}f^2)(X_{r\wedge \tau_k})$ where as
  above,
  \begin{align}\label{tau-n}
    \tau_k=\inf\big\{t\in [s,T[\colon\rho_t(o,X_t)\geq k\big\},\quad k\geq 1.
  \end{align}
  By means of It\^o's formula, we have
  \begin{align}\label{Ito-pflogpf}
    & d (P_{r,t}^{\varrho}f^2)\log (P_{r,t}^{\varrho}f^2)(X_r)\notag\\
    &=d M_r+(L_r+\partial_r)(P_{r,t}^{\varrho}f^2\log P_{r,t}^{\varrho}f^2)(X_r)\, d r\notag\\
    &=d M_r + \l(\frac1{P_{r,t}^{\varrho}f^2}|\nabla^rP_{r,t}^{\varrho}f^2|_r^2+\varrho_r(1+\log P_{r,t}^{\varrho}f^2)P_{r,t}^{\varrho}f^2\r)(X_r)\, d r,\quad r\leq\tau_k\wedge t,
  \end{align}
  where $M_r$ is a local martingale.  On the other hand, by Corollary
  \ref{est-gradient-ineq}, we have the estimate,
  \begin{align*}
    \l|\nabla^r P_{r,t}^{\varrho}f^2\r|_r
    &\leq \exp\l(-\int_r^tK(u)\,du\r) P_{r,t}^{\varrho}|\nabla ^tf^2|_t+P_{r,t}^{\varrho}f^2 \int_r^t \kappa(u)\exp\l(-\int_r^uK(v)\,dv\r) du\\
    &\leq 2\exp\l(-\int_r^tK(u)\,du\r)P_{r,t}^{\varrho}(f|\nabla ^tf|_t)+P_{r,t}^{\varrho}f^2 \int_r^t \kappa(u)\exp\l(-\int_r^uK(v)\,dv\r) du\\
    &\leq 2\exp\l(-\int_r^tK(u)\,du\r)\sqrt{P_{r,t}^{\varrho}(f^2)P_{r,t}^{\varrho}(|\nabla ^tf|^2_t)}+P_{r,t}^{\varrho}f^2 \int_r^t \kappa(u)\exp\l(-\int_r^uK(v)\,dv\r) du
  \end{align*}
  which gives
  \begin{align*}
    \left|\nabla^r P_{r,t}^{\varrho}f^2\right|_r^2
    &\leq 4\exp\l(-2\int_r^tK(u)\,du\r)(P_{r,t}^{\varrho}f^2)P_{r,t}^{\varrho}(|\nabla ^tf|^2_t)\\
    &\quad  +2(P_{r,t}^{\varrho}f^2)^2 \l(\int_r^t \kappa(u)\exp\l(-\int_r^uK(v)\,dv\r) du\r)^2.
  \end{align*}
  Substituting back into \eqref{Ito-pflogpf}, we obtain
  \begin{align*}
    &d \l(\exp{\l(-\int_s^r \varrho_u(X_u)\,du\r)} (P_{r,t}^{\varrho}f^2)\log (P_{r,t}^{\varrho}f^2)(X_r)\r)\\
    &\leq d M_r+4\exp{\l(-2\int_r^tK(u)\, d u-\int_s^r \varrho_u(X_u)\,du\r)}P_{r,t}^{\varrho}|\nabla^tf|_t^2(X_r)\,d r\\
    &\qquad+2  \l(\int_r^t \kappa(u)\exp\l(-\int_r^uK(v)\,dv\r) du\r)^2\exp{\l(-\int_s^r \varrho_u(X_u)\,du\r)}P_{r,t}^{\varrho}f^2(X_r)\,d r\\
    &\qquad +\varrho_r(X_r)\exp{\l(-\int_s^r \varrho_u(X_u)\,du\r)}P_{r,t}^{\varrho}f^2(X_r)\,d r,\qquad 0\leq  s\leq r\leq \tau_k\wedge t.\end{align*}
  Integrating both sides from $s$ to $t\wedge \tau_k$, taking expectation, and
  letting $k \uparrow +\infty$,
  we obtain by the dominated convergence,
  \begin{align*}
    &P_{s,t}^{\varrho}(f^2\log f^2)-P_{s,t}^{\varrho}f^2\log (P_{s,t}^{\varrho}f^2)\\
    &\ \leq 4\int_s^{t}\exp\l(-2\int_r^tK(u)\,d u\r)\,d r\, P_{s,t}^{\varrho}|\nabla^tf|_t^2\\
    &\qquad +\int_s^t\l[2  \l(\int_r^t \kappa(u)\exp\l(-\int_r^uK(v)\,dv\r) du\r)^2+\sup \varrho_r^{+}\r]\,dr\,P_{s,t}^{\varrho}f^2.
  \end{align*}
  In other words,
  \begin{align}
    P_{s,t}^{\varrho}(f^2\log f^2)
    &\leq 4\l(\int_s^t\exp\l(-2\int_r^tK(u)\,d u\r)\,d r\r)P_{s,t}^{\varrho}|\nabla^t f|_t^2
      +P_{s,t}^{\varrho}f^2\log P_{s,t}^{\varrho}f^2\notag\\
    &\quad+\int_s^t\l[2  \l(\int_r^t \kappa(u)\exp\l(-\int_r^uK(v)\,dv\r) du\r)^2+\sup \varrho_r^{+}\r]dr\,P_{s,t}^{\varrho}f^2.\label{eq-log-sobolev}
  \end{align}
\end{proof}

\begin{proof}[Proof of Theorem \ref{log-S-superbound}]
  % From the log-Sobolev inequality \eqref{eq-log-sobolev}
  % with respect to
  % $P_{s,t}^{\varrho}$ is
  % \begin{align}\label{eq-log-sobolev}
  %   P_{s,t}^{\varrho}(f^2\log f^2)
  %   &\leq 4\l(\int_s^t\exp\l(-2\int_r^tK(u)\, d u\r)\,d r\r)P_{s,t}^{\varrho}|\nabla^t f|_t^2+P_{s,t}^{\varrho}f^2\log P_{s,t}^{\varrho}f^2\notag\\
  %   &\quad+\int_s^t\l(2  \l(\int_r^t \kappa(u)\e^{-\int_r^uK(v)\,dv}\, du\r)^2+\sup \varrho_r^{+}\r)\,dr\cdot P_{s,t}^{\varrho}f^2.
  % \end{align}
  By means of the fact that
$$\log^{+}(P^{\varrho}_{s,t}f^2)\leq P^{\varrho}_{s,t}f^2\leq
\exp\l(\int_s^t\sup \varrho^-_u \,du\r)\|f\|_{\infty}^2,$$ we can
integrate both sides of the log-Sobolev inequality
\eqref{eq-log-sobolev} with respect to $\mu_s$.  Taking \eqref{Eq:mut}
into account, we get
\begin{align}\label{mu-f-1}
  \mu_t(f^2\log f^2)&\leq 4\left(\int_s^t\exp\l(-2\int_r^tK(u)\, d u\r) d r\right) \mu_t(|\nabla^tf|^2_t)+\mu_s(P^{\varrho}_{s,t}f^2\log P^{\varrho}_{s,t}f^2)\notag\\
                    &\quad+\int_s^t\l[2  \l(\int_r^t \kappa(u)\exp\l(-\int_r^uK(v)\,dv\r) du\r)^2+\sup \varrho_r^{+}\r]dr\,\mu_t(f^2).
\end{align}
We deal first with the term
$\mu_s(P^{\varrho}_{s,t}f^2\log P_{s,t}^{\varrho}f^2)$.  Let
$1<p<q$. For any $h\in {]0,1-1/p[}$ let
$$r_h=\frac{ph}{p-1}\in {]0,1[}.$$ By the Riesz-Thorin interpolation
theorem, we have
\begin{align}\label{add-eq-2}
  \big\|P_{s,t}^{\varrho}f\big\|_{q_h,s}\leq
  \big\|P_{s,t}^{\varrho}\big\|_{(p,t)\rightarrow (q,s)}^{r_h}\,\big\|P_{s,t}^{\varrho}\big\|_{(1,t)\rightarrow (1,s)}^{1-r_h}\,\big\|f\big\|_{p_h,t},\quad
  f\in L^p(M,\mu_s),
\end{align}
where
$$\frac1{p_h}=\frac{1-r_h}{1}+\frac{r_h}{p}\quad\text{and}\quad
\frac1{q_h}=\frac{1-r_h}{1}+\frac{r_h}q.$$ Thus
 $$p_h=\frac1{1-h}\,\quad \mbox{and}\,\quad  q_h=\l(1-\frac{p(q-1)}{q(p-1)}h\r)^{-1}.$$
 Since $\|P_{s,t}^{\varrho}\|_{(1,t)\rightarrow (1,s)}\leq 1$, we get
 from Eq.~\eqref{add-eq-2} that
 $$\int \big(P_{s,t}^{\varrho}|f|^{2(1-h)}\big)^{q_h}\,d \mu_s
 \leq \|P_{s,t}^{\varrho}\|_{(p,t)\rightarrow
   (q,s)}^{r_hq_h}\,\|f\|_{2,t}^{q_h/p_h}.$$ Then, for
 $f\in C_0^{\infty}(M)$ satisfying $\|f\|_{2,t}=1$, we have
 \begin{align}\label{Est:h}
   \frac1h&\l(\int \big(P^{\varrho}_{s,t}|f|^{2(1-h)}\big)^{q_h}\,d \mu_s-\l(\int P^{\varrho}_{s,t}|f|^2\, d \mu_s\r)^{q_h/p_h}\r)\notag\\
          &=\frac1{h}\l(\int \big(P^{\varrho}_{s,t}|f|^{2(1-h)}\big)^{q_h}\,d \mu_s-1\r)\\
          &\leq\frac1{h}\big(\|P_{s,t}^{\varrho}\|_{(p,t)\rightarrow (q,s)}^{r_hq_h}-1\big).\notag
 \end{align}
 Taking the limit as $h\to0$ in \eqref{Est:h}, as
 $$\lim_{h\rightarrow
   0}\frac1{h}\l(\|P_{s,t}^{\varrho}\|_{(p,t)\rightarrow
   (q,s)}^{r_hq_h}-1\r)=\frac{p}{p-1}\log
 \|P_{s,t}^{\varrho}\|_{(p,t)\rightarrow (q,s)},$$ we obtain by
 dominated convergence,
 \begin{align*}
   \frac{p(q-1)}{q(p-1)}\int P^{\varrho}_{s,t}f^2\log P^{\varrho}_{s,t}f^2\,d\mu_s
   -\int P_{s,t}^{\varrho}(f^2\log f^2)\, d \mu_s
   \leq \frac{p}{p-1}\log \|P_{s,t}^{\varrho}\|_{(p,t)\rightarrow (q,s)},
 \end{align*}
 or equivalently,
 \begin{align}\label{Est:hto0}\mu_s(P_{s,t}^{\varrho}f^2\log
   P_{s,t}^{\varrho}f^2)\leq \frac{q(p-1)}{p(q-1)}\,\mu_t(f^2\log
   f^2)+\frac{q}{q-1}\log \|P_{s,t}^{\varrho}\|_{(p,t)\rightarrow
     (q,s)}.
 \end{align}
 Substituting \eqref{Est:hto0} back into Eq.~\eqref{mu-f-1}, we arrive
 at
 \begin{align}\label{eq-9}
   \mu_t(f^2\log f^2)\leq \gamma(s,t) \,\mu_t(|\nabla^tf|_t^2)+\tilde{\beta}(s,t)
 \end{align}
 for every $f\in C_0^{\infty}(M)$ satisfying $\|f\|_{2,t}=1$, where
 \begin{align*}
   \gamma(s,t)
   &=\frac{4p(q-1)}{q-p}\int_{s}^t\exp\left(-2\int_r^tK(u)\, d u\right)\, d r,\notag\\
   \tilde{\beta}(s,t)
   &=\frac{pq}{q-p}\log\l( \|P_{s,t}^{\varrho}\|_{(p,t)\rightarrow (q,s)}\r)\\
   &\quad\  +\frac{p(q-1)}{q-p}\int_s^t\l(2 \l(\int_r^t \kappa(u)\exp\l(-\int_r^uK(v)\,dv\r) du\r)^2+\sup \varrho_r^{+}\r)\,dr.
   % \label{eq-5}
 \end{align*}
 % Note that $\tilde{\beta}(\newdot,t)$ is a positive function on
 % $[0,t]$ and 2\leqp\leq q$. ???
 We complete the proof by letting
 \begin{equation*}\beta_t(r)=\tilde{\beta}(\gamma_t^{-1}(r),t).\qedhere\end{equation*}
\end{proof}

\begin{theorem}\label{Sobolevineq-to-superbound}
  We keep the situation of Theorem
  $\ref{log-S-superbound}$, that is $\varrho_t$ bounded and
  \begin{align*}
    &\Ric_t+\Hess_t(\phi_t)-\frac{1}{2}\partial_tg_t\geq K(t),
      \quad |d \varrho_t|\leq\kappa(t) \quad \mbox{for }\  t\in [0,T[.
  \end{align*}
  Suppose there exists a function $\beta_t\colon \R^+\rightarrow
  \R^+$ such that
  \begin{align}\label{log-S-I-2}
    \mu_t(f^2\log f^2)\leq r\mu_t(|\nabla^tf|_t^2)+\beta_t(r),\quad \text{for all }\ t\in [0,T[\ \text{and }\ \|f\|_{2,t}=1.
  \end{align}
  Then $$\|P_{s,t}^{\varrho}\|_{(p,t)\rightarrow
    (q,s)}<\infty\quad\text{for $1<p<q\ $ and $\ 0\leq s\leq t<T$}.$$
\end{theorem}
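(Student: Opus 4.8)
The strategy is a time-discretized bootstrap argument, converting the family of defective log-Sobolev inequalities \eqref{log-S-I-2} into hypercontractivity bounds through a differential inequality for $L^{p(r)}$-norms of $P^\varrho_{s,t}f$. First I would fix $t\in {[0,T[}$ and $f\in C_0^\infty(M)$ with $f\ge\delta>0$, set $u(r,\cdot):=P^\varrho_{r,t}f$ for $r\in [0,t]$, and consider a $C^1$ increasing function $r\mapsto p(r)$ with $p(t)=p$; the aim is to differentiate
\begin{align*}
  \Lambda(r):=\log\|u(r,\cdot)\|_{p(r),r}=\frac{1}{p(r)}\log\int (P^\varrho_{r,t}f)^{p(r)}\,d\mu_r.
\end{align*}
Using $\partial_r P^\varrho_{r,t}f=-(L_r-\varrho_r)P^\varrho_{r,t}f$ from \eqref{Eq:HeatEqu}, the relation $\partial_r\mu_r=-\varrho_r\mu_r$, integration by parts against $\mu_r$ (which produces the Dirichlet form $\mu_r(|\nabla^r v|_r^2\,v^{p(r)-2})$ up to a positive constant), and the elementary identity $\int v^{p}\log v\,d\mu = \frac{1}{p}\int w^2\log w^2\,d\mu$ for $w=v^{p/2}$, one obtains an expression of the shape
\begin{align*}
  \Lambda'(r)=\frac{p'(r)}{p(r)^2\|u\|_{p(r),r}^{p(r)}}\l(\mu_r(w^2\log w^2)-\|w\|_{2,r}^2\log\|w\|_{2,r}^2\r)
  -\frac{c(p(r))}{\|u\|_{p(r),r}^{p(r)}}\,\mu_r(|\nabla^r w|_r^2)+(\text{$\varrho$-terms}),
\end{align*}
where $w=u^{p(r)/2}$ and $c(p)=\tfrac{4(p-1)}{p^2}>0$, and where the $\varrho$-terms are controlled since $\varrho_t$ is bounded, giving a contribution bounded by $\int_s^t\sup|\varrho_r|\,dr$ along the flow.

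**Closing the estimate.**
Now apply \eqref{log-S-I-2} to $w/\|w\|_{2,r}$ with the free parameter $r$ in \eqref{log-S-I-2} chosen to be $r_0:=p(r)c(p(r))/p'(r)$, which is exactly the coefficient making the Dirichlet-form terms cancel. After this choice the bracket collapses and one is left with
\begin{align*}
  \Lambda'(r)\le \frac{p'(r)}{p(r)^2}\,\beta_r\!\l(\frac{p(r)c(p(r))}{p'(r)}\r)+\sup|\varrho_r|,\quad r\in[s,t].
\end{align*}
Integrating from $s$ to $t$ yields $\log\|P^\varrho_{s,t}f\|_{p(t),t\to?}$ — more precisely, it controls $\|f\|_{p(t),t}$ in terms of $\|P^\varrho_{s,t}f\|_{p(s),s}$, so running the argument with the roles arranged correctly (i.e. choosing $p(s)=p$, $p(t)=q$ and noting $P^\varrho_{s,t}$ maps functions at time $t$ to functions at time $s$) gives
\begin{align*}
  \|P^\varrho_{s,t}f\|_{q,s}\le \exp\l(\int_s^t\Big[\tfrac{p'(r)}{p(r)^2}\beta_r\big(\tfrac{p(r)c(p(r))}{p'(r)}\big)+\sup|\varrho_r|\Big]dr\r)\|f\|_{p,t}.
\end{align*}
It remains to exhibit one admissible choice of $p(\cdot)$: since $t<T$ is fixed and finite and $\beta_r$ is finite on ${]0,\infty[}$, pick $p$ linear, $p(r)=p+\frac{q-p}{t-s}(r-s)$, so that $p'(r)=\frac{q-p}{t-s}>0$ is constant and the argument $\frac{p(r)c(p(r))}{p'(r)}$ stays in a compact subinterval of ${]0,\infty[}$; as $\beta_r$ need only be locally bounded in $(r,\cdot)$ on $[s,t]$ (which one extracts from \eqref{log-S-I-2}, or may simply assume, as the $\beta_t$ produced in Theorem \ref{log-S-superbound} is explicit and finite), the integral is finite. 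This gives $\|P^\varrho_{s,t}\|_{(p,t)\to(q,s)}<\infty$, and a density argument (the estimate first obtained for $f\ge\delta$, then $f_\delta=(f^2+\delta)^{1/2}$, then general $f\in L^p$) completes the proof.

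**Main obstacle.**
The delicate point is the justification of differentiating $\Lambda(r)$ under the integral sign and the validity of the integration-by-parts step on the (possibly incomplete-in-time, curvature-blowing-up) evolving manifold: one must localize using the stopping times $\tau_k$ from \eqref{tau-n}, run the It\^o/Bochner computation up to $\tau_k\wedge t$ exactly as in the proof of Proposition \ref{th1}, and then pass to the limit $k\to\infty$ using the lower bound on $\varrho_t$, the non-explosion hypothesis, and dominated convergence — the gradient estimate of Corollary \ref{est-gradient-ineq} again supplying the needed control on $|\nabla^r P^\varrho_{r,t}f^2|_r$. A secondary technical issue is ensuring $r_0=p(r)c(p(r))/p'(r)$ genuinely lies in the domain ${]0,\infty[}$ of $\beta_r$ for all $r\in[s,t]$, which the linear choice of $p(\cdot)$ handles cleanly; one should also check integrability near the endpoints $r=s$ and $r=t$, but with $p'$ bounded away from $0$ and $\infty$ this is immediate.
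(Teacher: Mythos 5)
Your plan is the same Gross-type argument the paper uses: differentiate the $L^{q(s)}(\mu_s)$-norm of $P^\varrho_{s,t}f$ along a varying exponent, integrate by parts to produce a Dirichlet form, invoke the defective log-Sobolev inequality to trade entropy against the Dirichlet form, choose the free parameter to make the Dirichlet contributions cancel, and integrate the resulting scalar ODE. The paper implements this by picking the free parameter $r$ constant and tuning $q(s)=\e^{4r^{-1}(t-s)}(p-1)+1$ so that $q'(s)r/4+(q(s)-1)\equiv0$, while you propose a linear exponent and let the log-Sobolev parameter vary — either works, and your note on why the integral of $\beta_u(\cdot)$ stays finite is the right point to check.

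There is, however, a genuine sign problem in the middle step that would stop the calculation from closing as written. Since $\partial_r P^\varrho_{r,t}f=-(L_r-\varrho_r)P^\varrho_{r,t}f$ and $\int g\,L_r h\,d\mu_r=-\int\langle\nabla^r g,\nabla^r h\rangle_r\,d\mu_r$, the Dirichlet term in $\Lambda'(r)$ comes out \emph{positive}, namely
\[
\Lambda'(r)=\frac{p'(r)}{p(r)^2\|w\|_{2,r}^2}\,\mathrm{Ent}_{\mu_r}(w^2)
+\frac{4(p(r)-1)}{p(r)^2\|w\|_{2,r}^2}\,\mu_r\bigl(|\nabla^r w|_r^2\bigr)
+(\varrho\text{-terms}),\qquad w=(P^\varrho_{r,t}f)^{p(r)/2},
\]
not $-c(p(r))\mu_r(|\nabla^r w|^2)$ as in your sketch. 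Because the Dirichlet coefficient is positive, cancellation against the entropy term requires $p'(r)<0$, i.e.\ the exponent must \emph{decrease} from $q$ at time $s$ to $p$ at time $t$ — consistent with $P^\varrho_{s,t}$ sending time-$t$ data to time-$s$ data and with the paper's choice $q(s)=q$, $q(t)=p$ — and the cancelling parameter is $r_0=-4(p(r)-1)/p'(r)>0$, not $p(r)c(p(r))/p'(r)$. Your write-up has the exponent increasing and $r_0$ with the wrong sign (and a stray factor of $p(r)$), which would flip the direction of the final inequality to a useless lower bound on $\|P^\varrho_{s,t}f\|_{q,s}$. Once these signs are fixed, the rest of the argument — the $\tau_k$-localization, the dominated-convergence passage to $k\to\infty$, the use of Corollary~\ref{est-gradient-ineq} to justify differentiating under the integral, the handling of the $\varrho$-term via boundedness, and the density step $f\ge\delta\to f\in L^p$ — is exactly the paper's.
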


\begin{proof}
  Let $0\leq s<t< T$ and $f\in C_0^{\infty}(M)$ such that
  $f\geq\delta>0$.  To calculate the derivative of
  $ \mu_s(P_{s,t}^{\varrho}f)^{q(s)}$ with respect to $s$, we start
  with some preparatory calculations:
  % By Lemma \ref{lem1}, we need to check the following to
  % handle the derivative of $ \mu_s(P_{s,t}^{\varrho}f)^{q(s)}$ with
  % respect to $s$:
  \begin{align}\label{Est:Drift}
    (L_s&+\partial_s)(P_{s,t}^{\varrho}f)^{q(s)}\notag\\
        &=L_s(P_{s,t}^{\varrho}f)^{q(s)}+q(s)\,(P_{s,t}^{\varrho}f)^{q(s)-1}(\partial_sP_{s,t}^{\varrho}f)+q'(s)(P_{s,t}^{\varrho}f)^{q(s)}\log P_{s,t}^{\varrho} f\notag\\
        &=q(s)(q(s)-1)(P_{s,t}^{\varrho}f)^{q(s)-2}|\nabla^sP_{s,t}^{\varrho}f|_s^2\notag\\
        &\quad+q'(s)(P_{s,t}^{\varrho}f)^{q(s)}\log P_{s,t}^{\varrho}f +q(s)\varrho_s(P_{s,t}^{\varrho}f)^{q(s)}.
  \end{align}
  By Corollary \ref{est-gradient-ineq}, there exist positive constants
  $c_1(s,t)$ and $c_2(s,t)$ such that
  $$\big\||\nabla^sP_{s,t}^{\varrho}f|_s^2\big\|_{\infty}^{\mathstrut}\leq c_1(s,t)\,\|f\|^2_{\infty}
  +c_2(s,t)\,\||\nabla^tf|_t\|_{\infty}^2.$$ Moreover,
  $\|P_{s,t}^{\varrho}f\|_{\infty}^{\mathstrut}\leq
  (P_{s,t}^{\varrho}1)\,\|f\|_{\infty}^{\mathstrut}$ and
  $$(P_{s,t}^{\varrho}f)^{q(s)}\log^{+}(P_{s,t}^{\varrho}f)
  \leq (P^{\varrho}_{s,t}f)^{q(s)+1}\leq
  (P_{s,t}^{\varrho}1)^{q(s)+1}\,\|f\|_{\infty}^{q(s)+1}.$$ Combining
  these estimates, we obtain
$$\big\|(L_s+\partial_s)(P_{s,t}^{\varrho}f)^{q(s)}\big\|_{\infty}^{\mathstrut}<\infty.$$
Now, by Theorem \ref{lem1}, we see that
\begin{align*}
  \frac{d}{d s}
  \mu_s\big((P^{\varrho}_{s,t}f)^{q(s)}\big)
  &=-\mu_s\big(\varrho_s(P_{s,t}^{\varrho}f)^{q(s)}\big)+\mu_s\big(\partial_s(P_{s,t}^{\varrho}f)^{q(s)}\big)\\  &=-\mu_s\big(\varrho_s(P_{s,t}^{\varrho}f)^{q(s)}\big)+\mu_s\big((L_s+\partial_s) (P_{s,t}^{\varrho}f)^{q(s)}\big)\\
  % &=\mu_s\big(-q(s)(P^{\varrho}_{s,t}f)^{q(s)-1}(L_sP^{\varrho}_{s,t}f)+q'(s)(P^{\varrho}_{s,t}f)^{q(s)}\log
  % P_{s,t}
  % f\big)-\big(1-q(s)\big)\,\mu_s\big(\varrho_s(P_{s,t}^{\varrho}f)^{q(s)}\big)\\
  &=q(s)(q(s)-1)\,\mu_s\big(|\nabla^sP^{\varrho}_{s,t}f|_s^2(P^{\varrho}_{s,t}f)^{q(s)-2}\big)+q'(s)\,\mu_s\big((P^{\varrho}_{s,t}f)^{q(s)}\log P_{s,t}^{\varrho}f\big)\\
  &\quad-\big(1-q(s)\big)\,\mu_s\big(\varrho_s(P_{s,t}^{\varrho}f)^{q(s)}\big).
\end{align*}
For $\|P_{s,t}f\|_{q(s),s}$, since
$\| P_{s,t}^{\varrho}f\|_{q(s),s}^{1-q(s)}=
\left(\mu_s((P_{s,t}^{\varrho}f)^{q(s)}\big)\right)^{1/q(s)-1}$, we
thus find
\begin{align}\label{add-eq-4}
  \frac{d}{d s}\| P_{s,t}^{\varrho}f\|_{q(s),s}
  &= (q(s)-1)\|P_{s,t}^{\varrho}f\|_{q(s),s}^{1-q(s)}\mu_s\big(|\nabla^sP_{s,t}^{\varrho}f|_s^2(P_{s,t}^{\varrho}f)^{q(s)-2}\big)\nonumber\\
  &\quad+\frac{q'(s)}{q(s)}\|P_{s,t}^{\varrho}f\|_{q(s),s}^{1-q(s)}\mu_s\big((P_{s,t}^{\varrho}f)^{q(s)}\log P_{s,t}^{\varrho}f\big)\nonumber\\
  &\quad-\frac{q'(s)}{q(s)}\|P_{s,t}^{\varrho}f\|_{q(s),s}\log \|P_{s,t}^{\varrho}f\|_{q(s),s}\notag\\
  &\quad+\frac{q(s)-1}{q(s)}\|P_{s,t}^{\varrho}f\|_{q(s),s}^{1-q(s)}\mu_s\big(\varrho_s(P_{s,t}^{\varrho}f)^{q(s)}\big)\notag\\
  &\geq (q(s)-1)\|P_{s,t}^{\varrho}f\|_{q(s),s}^{1-q(s)}\mu_s\big(|\nabla^sP_{s,t}^{\varrho}f|_s^2(P_{s,t}^{\varrho}f)^{q(s)-2}\big)\nonumber\\
  &\quad+\frac{q'(s)}{q(s)}\|P_{s,t}^{\varrho}f\|_{q(s),s}^{1-q(s)}\mu_s\big((P_{s,t}^{\varrho}f)^{q(s)}\log P_{s,t}^{\varrho}f\big)\nonumber\\
  &\quad-\frac{q'(s)}{q(s)}\|P_{s,t}^{\varrho}f\|_{q(s),s}^{1-q(s)}\,\|P_{s,t}^{\varrho}f\|_{q(s),s}^{q(s)}\log \|P_{s,t}^{\varrho}f\|_{q(s),s}\notag\\
  &\quad-\frac{(q(s)-1)\sup\varrho_s^-}{q(s)}\|P_{s,t}^{\varrho}f\|_{q(s),s}.
\end{align}
On the other hand, passing from $f$ to $f^{p/2}/\|f^{p/{2}}\|_{2,s}$
in the log-Sobolev inequality \eqref{log-S-I-2}, we obtain
$$\int f^p\log \l(\frac{f^p}{\|f^{p/{2}}\|_{2,s}^2}\r)d \mu_s\leq r \frac{p^2}{4}\int f^{p-2}|\nabla^sf|_s^2\, d \mu_s+\beta_s(r) \|f^{p/{2}}\|_{2,s}^2.$$
In this inequality, replacing $f$ and $p$ by $P_{s,t}^{\varrho}f$ and
$q(s)$ respectively, we obtain
\begin{align}\label{Eq:logSob}
  &\mu_s\l((P_{s,t}^{\varrho}f)^{q(s)}
    \log (P_{s,t}^{\varrho}f)\r)-\|P_{s,t}^{\varrho}f\|_{q(s),s}^{q(s)}\log \|P_{s,t}^{\varrho}f\|_{q(s),s}\notag\\
  &\quad\leq r \frac{q(s)}{4}\int (P_{s,t}^{\varrho}f)^{q(s)-2}|\nabla^s P_{s,t}^{\varrho}f|_s^2\, d \mu_s+\frac{\beta_s(r)}{q(s)}
    \|P_{s,t}^{\varrho}f\|_{q(s),s}^{q(s)}.
\end{align}
We now let
$$q(s)={\e}^{4r^{-1}(t-s)}(p-1)+1,\quad q(t)=p.$$
Note that $q$ is a decreasing function and $q'(s)r/4+(q(s)-1)\equiv0$.
Thus,  combining Eq.~\eqref{Eq:logSob} with Eq.~\eqref{add-eq-4}, we
arrive at
$$\frac{d}{d s}\|P^{\varrho}_{s,t}f\|_{q(s),s}\geq \l(\frac{\beta_s(r)q'(s)}{q(s)^2}-\frac{(q(s)-1)\sup\varrho_s^-}{q(s)}\r)\|P^{\varrho}_{s,t}f\|_{q(s),s},\quad 0\leq s\leq t<T.$$
It follows that
\begin{align}\label{Pf-eq-2}
  \|P_{s,t}^{\varrho}f\|_{q(s), s}\leq & \exp{\l(-\int_s^t\frac{\beta_u(r)q'(u)}{q(u)^2}\, d u+\int_s^t\frac{(q(u)-1)\sup\varrho_u^-}{q(u)}\, d u\r)}\,\|f\|_{p,t}.
\end{align}
If we impose that $q(s)=q$, then
$r=4(t-s)\l(\log \frac{q-1}{p-1}\r)^{-1}$.  Substituting the value of
$r$ into Eq.~\eqref{Pf-eq-2} yields
\begin{align*}
  \|P_{s,t}^{\varrho}f\|_{q,s}\leq
  \exp\Bigg(&-\int_s^t\frac{\beta_u(4(t-s) \l(\log (q-1)/(p-1)\r)^{-1})\,q'(u)}{q(u)^2}\, d u\\
            &+\int_s^t\frac{(q(u)-1)\sup\varrho_u^-}{q(u)}\, d u\Bigg)\,\|f\|_{p,t}.\qedhere
\end{align*}
\end{proof}

Using the dimension-free Harnack inequality, we are now in position to
give a necessary and sufficient condition for supercontractivity of
$P_{s,t}^{\varrho}$.

\begin{theorem}\label{th-2}
  We keep the situation of Theorem \textup{\ref{log-S-superbound}},
  that is $\varrho_t$ bounded and
  \begin{align*}
    &\Ric_t+\Hess_t(\phi_t)-\frac{1}{2}\partial_tg_t\geq K(t),
      \quad |d \varrho_t|\leq\kappa(t) \quad \mbox{for }\  t\in [0,T[.
  \end{align*}
  The condition
  $$\|P_{s,t}^{\varrho}\|_{(p,t)\rightarrow (q,s)}<\infty,
  \quad \text{for all }\ 1<p<q<\infty,\ 0\leq s\leq t< T,$$ then holds
  if and only if
  $$\mu_t\l(\exp(\lambda\rho_t^2)\r)<\infty\quad \text{for all \ $\lambda>0$ and
    $t\in [0,T[$}.$$
\end{theorem}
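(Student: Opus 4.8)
The plan is to establish the equivalence through its two implications. Throughout I use the intertwining identity $\mu_s(P_{s,t}^\varrho g)=\mu_t(g)$ of \eqref{Eq:mut}, the positivity preserving property $|P_{s,t}^\varrho f|\le P_{s,t}^\varrho|f|$ (the Feynman--Kac weight $\exp(-\int\varrho)$ being positive), and the fact that $\mu_t(\e^{\lambda\rho_t^2})<\infty$ forces each $\mu_t$ to be finite (indeed $\e^{\lambda\rho_t^2}\ge1$); in the second direction I assume, as is standard in this setting, that each $\mu_t$ is a probability measure.

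\textbf{From Gaussian integrability to supercontractivity.} Fix $0\le s\le t<T$ and $1<p<q<\infty$. For nonnegative $f\in\mathcal B_b(M)$, divide in Theorem~\ref{Harnack-ineq}(i) to get
\begin{align*}
  (P_{s,t}^\varrho f^p)(y)\ \ge\ (P_{s,t}^\varrho f)^p(x)\,\exp\!\Big(-A-b\,\rho_s^2(x,y)-c\,\rho_s(x,y)\Big),
\end{align*}
with $A=(p-1)\int_s^t\sup\varrho_r^-\,dr$, $b=\tfrac{p}{4(p-1)\alpha(s,t)}>0$, $c=\tfrac{p\,\eta(s,t)}{\alpha(s,t)}\ge0$. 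Fix $x$, use $\rho_s^2(x,y)\le2\rho_s^2(x,o)+2\rho_s^2(o,y)$ and $\rho_s(x,y)\le\rho_s(x,o)+\rho_s(o,y)$, and integrate in $y$ against $\mu_s$; by \eqref{Eq:mut} this yields
\begin{align*}
  (P_{s,t}^\varrho f)^p(x)\ \le\ \frac{\mu_t(f^p)\,\e^{A}}{I_s}\,\exp\!\big(2b\,\rho_s^2(x,o)+c\,\rho_s(x,o)\big),
\end{align*}
where $I_s:=\int_M\e^{-2b\rho_s^2(o,y)-c\rho_s(o,y)}\,\mu_s(dy)\in(0,\infty)$ (positive since the integrand is everywhere positive, finite since $\mu_s$ is finite). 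Hence $(P_{s,t}^\varrho f)(x)\le C_1(x)\|f\|_{p,t}$ with $C_1(x)=(\e^A/I_s)^{1/p}\exp(\tfrac{2b}{p}\rho_s^2(x,o)+\tfrac{c}{p}\rho_s(x,o))$; raising to the power $q$ and integrating in $x$ against $\mu_s$ gives $\|P_{s,t}^\varrho f\|_{q,s}\le\|f\|_{p,t}\,\big(\mu_s(C_1^q)\big)^{1/q}$, and $\mu_s(C_1^q)<\infty$ because its integrand is the exponential of a quadratic polynomial in $\rho_s$ and $\mu_s(\e^{\lambda\rho_s^2})<\infty$ for all $\lambda>0$. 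By density of bounded functions in $L^p(\mu_t)$ and positivity of $P_{s,t}^\varrho$, the bound passes to general $f$, so $\|P_{s,t}^\varrho\|_{(p,t)\to(q,s)}<\infty$.

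\textbf{From supercontractivity to Gaussian integrability.} By Theorem~\ref{log-S-superbound}, supercontractivity gives, for every $r>0$ and every $t$, the super log-Sobolev inequality $\mu_t(f^2\log f^2)\le r\,\mu_t(|\nabla^tf|_t^2)+\beta_t(r)$ for $\|f\|_{2,t}=1$. Fix $t$ and $\lambda>0$, set $\psi_n=\rho_t\wedge n$ (bounded, and $1$-Lipschitz for $g_t$, so $|\nabla^t\psi_n|_t\le1$ $\mu_t$-a.e.), and apply the inequality to $f=\e^{a\psi_n/2}/\|\e^{a\psi_n/2}\|_{2,t}$. Writing $H_n(a)=\mu_t(\e^{a\psi_n})$ and using $\operatorname{Ent}_{\mu_t}(\e^{a\psi_n})=aH_n'(a)-H_n(a)\log H_n(a)$ together with $\mu_t(|\nabla^t(\e^{a\psi_n/2})|_t^2)\le\tfrac{a^2}{4}H_n(a)$ one obtains, for every $r>0$,
\begin{align*}
  \frac{d}{da}\Big(\frac{\log H_n(a)}{a}\Big)\ \le\ \frac{r}{4}+\frac{\beta_t(r)}{a^2}.
\end{align*}
Granting an a priori first exponential moment $\mu_t(\e^{a_0\rho_t})<\infty$ for some $a_0>0$ (obtained below from the curvature bound), integrate this from $a_0$ to $a$ and let $n\to\infty$ (monotone convergence) to get, for all $a>0$ and $r>0$,
\begin{align*}
  \log\mu_t(\e^{a\rho_t})\ \le\ \frac{a}{a_0}\log\mu_t(\e^{a_0\rho_t})+\frac{r\,a^2}{4}+\beta_t(r)\Big(\frac{a}{a_0}-1\Big).
\end{align*}
Finally decompose $\mu_t(\e^{\lambda\rho_t^2})\le\e^{\lambda}\mu_t(M)+\sum_{k\ge1}\e^{\lambda(k+1)^2}\mu_t(\rho_t\ge k)$, bound each tail by $\mu_t(\rho_t\ge k)\le\e^{-a_k k}\mu_t(\e^{a_k\rho_t})$ with $a_k=2\lambda(k+1)$, and insert the above estimate with the fixed choice $r=\tfrac1{2\lambda}$: the $k$-th exponent then equals $\tfrac{\lambda(k+1)(3-k)}{2}+O(k)=-\tfrac{\lambda}{2}k^2+O(k)$, the series converges, and $\mu_t(\e^{\lambda\rho_t^2})<\infty$.

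\textbf{Main obstacle.} The first direction is routine once Theorem~\ref{Harnack-ineq} is in hand. The crux is the second direction: a purely defective log-Sobolev inequality yields no exponential integrability, since the defect $\beta_t(r)$ blocks the Herbst argument when one tries to integrate the differential inequality from $a=0$; everything therefore hinges on the a priori moment $\mu_t(\e^{a_0\rho_t})<\infty$. I would secure it from the curvature lower bound $\Ric_t+\Hess_t(\phi_t)-\tfrac12\partial_tg_t\ge K(t)$ just as in the static case: via Corollary~\ref{est-gradient-ineq} and \eqref{Eq:mut} this bound controls $\mu_t$ enough to produce a Poincar\'e-type inequality, which by Rothaus's lemma upgrades the defective log-Sobolev inequality at a single fixed $r$ to a tight one, whence the classical Herbst bound supplies the required exponential moment. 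Making this step precise, and keeping all estimates uniform in the truncation parameter $n$, is where I expect the real work to be.
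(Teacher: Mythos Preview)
Your first implication is correct and essentially the paper's argument; the only cosmetic difference is that the paper integrates the Harnack inequality over a ball $B_s(o,R)$ (choosing $R$ so that $\mu_s(B_s(o,R))\ge2^{-p}$) rather than over all of $M$, but your version works equally well since $\e^{-2b\rho_s^2-c\rho_s}\le1$ and $\mu_s$ is finite.

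The second implication, however, has a real gap precisely where you flag it. You need an a priori moment $\mu_t(\e^{a_0\rho_t})<\infty$ to anchor the Herbst integration at $a_0$, and your proposed route to it---Poincar\'e from the curvature bound, then Rothaus to tighten the defective log-Sobolev---is not available here: $K(t)$ is allowed to be negative, the setting is time-inhomogeneous, and neither Corollary~\ref{est-gradient-ineq} nor \eqref{Eq:mut} yields a spectral gap for $\mu_t$. So the argument does not close.

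The paper avoids this a priori moment entirely by a self-improving bootstrap. From the same differential inequality you derived, integrate not from $a_0$ to $a$ but from $\lambda$ to $2\lambda$; this gives the doubling estimate
\[
  h_{t,n}(2\lambda)\ \le\ h_{t,n}(\lambda)^2\,\exp\!\Big(\tfrac{r}{2}\lambda^2+\beta_t(r)\Big).
\]
Now split $h_{t,n}(\lambda)=\int_{\{\lambda\rho_t\ge M\}}+\int_{\{\lambda\rho_t<M\}}$, bound the first piece by Cauchy--Schwarz as $\mu_t(\lambda\rho_t\ge M)^{1/2}\,h_{t,n}(2\lambda)^{1/2}$, insert the doubling estimate, and choose $M$ (using only that $\mu_t$ is a finite measure, hence tight) so that $\mu_t(\lambda\rho_t\ge M)\le\tfrac14\exp(-\tfrac{r}{2}\lambda^2-\beta_t(r))$. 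The first piece is then $\le\tfrac12 h_{t,n}(\lambda)$ and the second is $\le\e^{M}$, whence $h_{t,n}(\lambda)\le2\e^{M}$ uniformly in $n$. Letting $n\to\infty$ gives $\mu_t(\e^{\lambda\rho_t})<\infty$ for every $\lambda>0$, with no a priori input beyond finiteness of $\mu_t$. The passage from exponential to Gaussian moments then proceeds essentially as you outlined (the paper defers details to \cite{ChT18}).
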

\begin{proof}
  By means of the Harnack inequality \eqref{Harnack-ineq1}, the
  theorem can be proved along the same lines as in \cite[Theorem
  5.7.3]{Wbook1} or \cite{ChT18}.  For the reader's convenience, we
  include a proof here.  First, for $0\leq s\leq t<T$, $p>1$ and
  $f\in C_b(M)$, it follows from the Harnack inequality
  \eqref{Harnack-ineq1} that

$$\l|(P_{s,t}^{\varrho}f)^p(x)\r|\leq (P_{s,t}^{\varrho}|f|^p)(y)\exp{\l((p-1)\int_s^t\sup\varrho_r^-\,dr +\frac{p\rho_s^2(x,y)}{4(p-1)\alpha(s,t)}+\frac{\eta(s,t)\rho_s(x,y)}{\alpha(s,t)}\r)}.$$
Thus, if $\mu_t(|f|^p)=1$, then
\begin{align}\label{est-P}
  1&\geq |P_{s,t}^{\varrho}f(x)|^p\int \exp{\l((1-p)\int_s^t\sup\varrho_r^-\,dr -\frac{p\rho_s^2(x,y)}{4(p-1)\alpha(s,t)}-\frac{\eta(s,t)\rho_s(x,y)}{\alpha(s,t)}\r)}\,\mu_s(d y)\nonumber\\
   &\geq  |P_{s,t}^{\varrho}f(x)|^p\,\mu_s(B_s(o,R))\notag\\
   &\quad\times\exp {\l((1-p)\int_s^t\sup\varrho_r^-\,dr -\frac{p(\rho_s(x)+R)^2}{4(p-1)\alpha(s,t)}
     -\frac{\eta(s,t)(\rho_s(x)+R)}{\alpha(s,t)}\r)},
\end{align}
where $B_s(o,R)=\{y\in M\colon\rho_s(y)\leq R\}$ denotes the geodesic
ball (with respect to the metric $g(s)$) of radius $R$ about $o\in M$
and where $\rho_t(\newdot)=\rho_t(o,\newdot)$.  Since
$\mu_t\big(\exp(\lambda\rho_t^2)\big)<\infty$, the system of measures
$(\mu_s)$ is compact, i.e., there exists $R=R(s)>0$, possibly
depending on $s$, such that
\begin{align*}
  \mu_s\big(B_s(o,R(s))\big)
  &=\mu_s\big(\{x: \rho_s(x)\leq R(s)\}\big)\geq 1-\frac{\mu_s(\rho_s^2)}{R(s)^2}
  % \geq 1-\frac{H_2(s)}{R(s)^2}
    \geq 2^{-p}
\end{align*}
(after normalising $\mu_s$ to a probability measure).  Combining the
last estimate with Eq.~\eqref{est-P}, we arrive at
$$
1\geq
|P_{s,t}^{\varrho}f(x)|^p\,2^{-p}\exp{\l((1-p)\int_s^t\sup\varrho_r^-\,dr-\frac{\eta(s,t)(\rho_s(x)+R)}{\alpha(s,t)}
  -\frac{p(\rho_s(x)+R)^2}{4(p-1)\alpha(s,t)}\r)}
$$
which further implies
\begin{align}\label{est-P-1}
  |P_{s,t}^{\varrho}f(x)|\leq 2\exp
  {\l(\frac{p-1}{p}\int_s^t\sup\varrho_r^-\,dr+\frac{\eta(s,t)(\rho_s(x)+R)}{p\alpha(s,t)}+\frac{(\rho_s(x)+R)^2}{4(p-1)\alpha(s,t)}\r)},\quad s<t.
\end{align}
Therefore, we achieve
$$\|P_{s,t}^{\varrho}f\|_{q,s}\leq \l(\mu_s\l(\exp\big(q(c_1+c_2\rho_s^2)\big)\r)\r)^{1/q}$$
for some positive constants $c_1, c_2$ depending on $s$ and $t$.
Hence, if $\mu_s(\exp(\lambda \rho_s^2))<\infty$ for any $\lambda>0$
and $s\in [0,T[$, then $P_{s,t}$ is supercontractive, i.e., for any
$1<p<q<\infty$,
$$\|P_{s,t}^{\varrho}\|_{(p,t)\rightarrow (q,s)}<\infty.$$

Conversely, if the semigroup $P_{s,t}^{\varrho}$ is supercontractive,
by Theorem \ref{log-S-superbound} the super log-Sobolev inequalities
\eqref{log-S-I-2} holds. We first prove that
$\mu_s(\e^{\lambda \rho_s})<\infty$ for $s\in [0,T[$ and $\lambda>0$.
To this end, let $\rho_{s,k}=\rho_s\wedge k$ and
$h_{s,k}(\lambda)=\mu_s(\exp{(\lambda \rho_{s,k})})$. Taking
$\exp(\lambda\rho_{s,k}/2)$ in the super log-Sobolev inequality
\eqref{log-S-I}, we obtain
\begin{align*}
  \lambda h'_{s,k}(\lambda)&-h_{s,k}(\lambda)\log h_{s,k}(\lambda)\leq h_{s,k}(\lambda)\lambda^2\l(\frac{r}{4}+\frac{\beta_s(r)}{\lambda^2}\r).
\end{align*}
This implies
\begin{align}\label{add-eq-5}
  \l(\frac1{\lambda}\log h_{s,k}(\lambda)\r)'=\frac{\lambda h'_{s,k}(\lambda)-h_{s,k}(\lambda)\log h_{s,k}(\lambda)}{\lambda^2 h_{s,k}(\lambda)}\leq \frac{r}{4}+\frac{\beta_s(r)}{\lambda^2}.
\end{align}
Integrating both sides of Eq.~\eqref{add-eq-5} from $\lambda$ to
$2\lambda$, we obtain
\begin{align}\label{eq-1}
  h_{s,k}(2\lambda)\leq h_{s,k}^2(\lambda)\exp\l(\frac{r}2\lambda^2+\beta_s(r)\r).
\end{align}
From this inequality, along with the fact that there exists a constant
$M_s$ such that
$$\mu_s(\{\lambda \rho_s\geq M_s\})\leq \frac1{4}\exp\l(-\frac{r}2\lambda^2-\beta_s(r)\r),$$
we get
\begin{align*}
  h_{s,k}(\lambda)
  &=\int_{\{\lambda \rho_s\geq M_s\}}\exp\l(\lambda \rho_{s,k}\r)\, d \mu_s
    +\int_{\{\lambda \rho_s<M_s\}}\exp\l(\lambda \rho_{s,k}\r) \, d \mu_s\\
  &\leq \mu_s(\{\lambda \rho_s\geq M_s\})^{1/2}\, \mu_s({\e}^{2\lambda \rho_{s,k}})^{1/2}+{\e}^{M_s}\mu_s(\{\lambda \rho_s<M_s\})\\
  &\leq \l(\frac1{4}\exp\l(-\frac{r}2\lambda^2-\beta_s(r)\r)\r)^{1/2}
    \exp\l(\frac{r}{4}\lambda^2+\frac12\beta_s(r)\r)h_{s,k}(\lambda)+{\e}^{M_s}\mu_s(\{\lambda \rho_s<M_s\})\\
  &\leq \frac12 h_{s,k}(\lambda)+{\e}^{M_s}\mu_s(\{\lambda \rho_s<M_s\}),
\end{align*}
which implies
$h_{s,k}(\lambda)\leq 2{\e}^{M_s}\mu_s(\{\lambda \rho_s<M_s\})$ for
$s\in [0,T[$. As $M_s$ is independent of $k$, letting $k$ tend to
infinity, we arrive at
\begin{equation*}
  \mu_s({\e}^{\lambda \rho_s})<\infty,\quad \text{for all }  s\in [0,T[.
\end{equation*}

To prove that moreover $\mu_s(\e^{\lambda \rho_s^2})<\infty$ for
$s\in [0,T[$ and $\lambda>0$, we can follow the argument in
\cite[pp. 22-23]{ChT18}.
\end{proof}

As an application, we apply the results above to the modified flow
$(M,g_t,\phi_t)$ evolving by
\begin{align*}
  \left\{
  \begin{aligned}
    &\partial_tg(x,\cdot)(t)=-2\big(h+\Hess(\phi)\big)(x,t); \\
    &\partial_t \phi_t=-\Delta_t\phi_t-\sH_t.
  \end{aligned}
      \right.
\end{align*}
For this system, we have $\varrho\equiv0$. Thus, by Theorem
\ref{log-S-superbound}, we get the following result.

\begin{corollary}\label{cor-log-Sobolev}
  Assume that $(g_t, \phi_t)$ follows the evolving equation
  \eqref{geoflow-2} and
  \begin{align*}
    &\Ric_t+\Hess_t(\phi_t)-\frac{1}{2}\partial_tg_t\geq K(t), \quad \mbox{for all}\ \ t\in [0,T[.
  \end{align*}
  Suppose that
  \begin{align*}
    \|P_{s,t}\|_{(p,t)\rightarrow (q,s)}<\infty \ \  \mbox{for} \ 1<p<q \ \mbox{and} \ 0\leq s\leq t<T.
  \end{align*}
  Then the super log-Sobolev inequalities
  \begin{align}\label{log-SI}
    \int f^2\log f^2\, d \mu_t\leq  r\, \int |\nabla^tf|_t^2\,d\mu_t+{\beta}_t(r),\quad r>0,
  \end{align}
  hold for every $f\in H^1(M,\mu_t)$ such that $\|f\|_{2,t}=1$,
  $t\in [0,T[$, where $\beta_t(r):=\tilde{\beta}(\gamma^{-1}_t(r),t)$ and
  \begin{align*}
    \tilde{\beta}(s,t)&=\frac{pq}{q-p}\log (\|P_{s,t}\|_{(p,t)\rightarrow (q,s)}),\\
    \gamma(s,t)
              &=\frac{4p(q-1)}{q-p}\int_{s}^t\exp\left(-2\int_r^tK(u)\,d u\right)\,d r,\notag\\
    \gamma^{-1}_t(r)&=\inf\{s\geq 0:\gamma(s,t)\leq r\}. \notag
  \end{align*}
\end{corollary}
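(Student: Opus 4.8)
The plan is to obtain Corollary~\ref{cor-log-Sobolev} as the special case $\varrho\equiv 0$ of Theorem~\ref{log-S-superbound}. The first and only substantive point is to check that the weight $\varrho$ vanishes identically for the flow under consideration. Exactly as in the proof of Corollary~\ref{cor1}, one has
\begin{align*}
  \varrho_t=\partial_t\phi_t+\tr_{g_t}\big(h_t+\Hess_t(\phi_t)\big)=\partial_t\phi_t+\Delta_t\phi_t+\sH_t,
\end{align*}
which equals $0$ by the second equation of the system, $\partial_t\phi_t=-\Delta_t\phi_t-\sH_t$. In particular $\sup|\varrho_t|=0<\infty$ and $|d\varrho_t|\equiv 0$, so the standing hypotheses of Theorem~\ref{log-S-superbound} ($\varrho_t$ bounded, $|d\varrho_t|\le\kappa(t)$) are satisfied, and we are free to take $\kappa\equiv 0$. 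Moreover, since the Feynman--Kac exponent vanishes, $P_{s,t}^{\varrho}=P_{s,t}$, so the assumed finiteness of $\|P_{s,t}\|_{(p,t)\to(q,s)}$ is precisely the supercontractivity hypothesis required in Theorem~\ref{log-S-superbound}, and the curvature bound $\Ric_t+\Hess_t(\phi_t)-\frac12\partial_tg_t\ge K(t)$ is the one assumed there.

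Next I would simply invoke Theorem~\ref{log-S-superbound}: it yields the super log-Sobolev inequality \eqref{log-SI} for every $f\in H^1(M,\mu_t)$ with $\|f\|_{2,t}=1$, with $\beta_t(r)=\tilde\beta(\gamma_t^{-1}(r),t)$. It then remains only to specialise the formulae for $\tilde\beta$ and $\gamma$ given in Theorem~\ref{log-S-superbound} to $\kappa\equiv 0$ and $\varrho\equiv 0$. The term
\begin{align*}
  \int_s^t\Big(2\Big(\int_r^t\kappa(u)\exp\big(-\!\int_r^uK(v)\,dv\big)\,du\Big)^2+\sup\varrho_r^{+}\Big)\,dr
\end{align*}
vanishes, so $\tilde\beta(s,t)=\frac{pq}{q-p}\log\|P_{s,t}\|_{(p,t)\to(q,s)}$, while $\gamma(s,t)=\frac{4p(q-1)}{q-p}\int_s^t\exp\big(-2\int_r^tK(u)\,du\big)\,dr$ and $\gamma_t^{-1}$ are unchanged; since $\gamma(\cdot,t)$ is only meaningful on $[0,t]$, the two expressions $\inf\{s\in[0,t]:\gamma(s,t)\le r\}$ and $\inf\{s\ge 0:\gamma(s,t)\le r\}$ coincide. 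This reproduces exactly the statement of the corollary.

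The proof therefore has no real obstacle: the work done for Theorem~\ref{log-S-superbound} (Proposition~\ref{th1}, the Riesz--Thorin interpolation step, and the passage $h\to 0$) already carries the full weight, and here one only records that the $\varrho$- and $\kappa$-dependent correction terms drop out. The single place that deserves a line of verification is the identity $\varrho\equiv 0$, which is the same elementary trace computation used for the heat-kernel bound in Corollary~\ref{cor1}.
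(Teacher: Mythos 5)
Your proposal is correct and follows essentially the same route as the paper, which simply observes that $\varrho\equiv 0$ for this system and then invokes Theorem~\ref{log-S-superbound} directly. One small point worth flagging: the corollary's reference to \eqref{geoflow-2} (the flow with $\phi_t\equiv0$) is a typo for the modified flow displayed immediately before it; you implicitly and correctly read through this, since the identity $\partial_t\phi_t=-\Delta_t\phi_t-\sH_t$ you invoke is the second equation of the modified system, and it is that system for which the trace computation yields $\varrho\equiv0$ and hence $P^\varrho_{s,t}=P_{s,t}$.
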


The following result is a consequence of Theorem \ref{th-2}.

\begin{corollary}\label{cor-supercontractivity}
  Assume that $(g_t, \phi_t)$ evolves according to \eqref{geoflow-2}
  and that
  $$\Ric_t+\Hess_t(\phi_t)-\frac{1}{2}\partial_tg_t\geq K(t),\quad t\in [0,T[,$$ for some
  function $K\in C([0,T[)$. Then
  $$\|P_{s,t}\|_{(p,t)\rightarrow (q,s)}<\infty,\ \text{ for } 1<p<q<\infty\
  \text{ and }\ 0\leq s\leq t< T,$$ if and only if
  $$\mu_t\big(\exp(\lambda\rho_t^2)\big)<\infty,\ \text{ for }\lambda>0 \ \text{ and }\ t\in [0,T[.$$
\end{corollary}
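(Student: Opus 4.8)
The plan is to derive this as an immediate corollary of Theorem~\ref{th-2}. The single point to observe is that for the modified geometric flow coupled to the conjugate heat equation the Feynman--Kac weight $\varrho$ vanishes identically: arguing exactly as in the proof of Corollary~\ref{cor1},
\[
  \varrho_t = \partial_t\phi_t + \tr_{g_t}\bigl(h_t + \Hess_t(\phi_t)\bigr) = \partial_t\phi_t + \Delta_t\phi_t + \sH_t = 0 ,
\]
so that $P_{s,t}^{\varrho} = P_{s,t}$. Consequently $\varrho$ is trivially bounded and $|d\varrho_t| \le \kappa(t)$ holds with $\kappa \equiv 0$, while the assumed curvature bound $\Ric_t + \Hess_t(\phi_t) - \frac12 \partial_t g_t \ge K(t)$ is precisely the remaining hypothesis; thus all assumptions of Theorem~\ref{th-2} are in force.

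It then suffices to invoke Theorem~\ref{th-2} with this choice of $\varrho$ and $\kappa$. It asserts that
\[
  \|P_{s,t}^{\varrho}\|_{(p,t)\to(q,s)} < \infty \quad \text{for all } 1<p<q<\infty, \ 0 \le s \le t < T,
\]
holds if and only if $\mu_t\bigl(\exp(\lambda\rho_t^2)\bigr) < \infty$ for all $\lambda>0$ and $t\in[0,T[$. Replacing $P_{s,t}^{\varrho}$ by $P_{s,t}$ throughout yields exactly the claimed equivalence, and the argument is complete.

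I expect no serious obstacle: the corollary is a direct specialization of Theorem~\ref{th-2} to the degenerate case $\varrho \equiv 0$, and the only delicate point is the bookkeeping that identifies $\varrho$ for the modified flow, which has already been carried out in the proof of Corollary~\ref{cor1}. (For orientation, inside Theorem~\ref{th-2} the forward implication rests on the dimension-free Harnack inequality~\eqref{Harnack-ineq1} together with the compactness of the family of measures $(\mu_s)$, whereas the converse implication uses the super log-Sobolev inequality supplied by Theorem~\ref{log-S-superbound}; both ingredients are available under the present hypotheses.)
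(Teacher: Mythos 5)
Your proposal is correct and follows exactly the paper's intended route: the paper states the result as a direct consequence of Theorem~\ref{th-2}, and the only substantive observation needed is that for the modified flow coupled with the conjugate heat equation for $\phi_t$ one has $\varrho\equiv 0$, so that $P^{\varrho}_{s,t}=P_{s,t}$ and the remaining hypotheses of Theorem~\ref{th-2} hold trivially (with $\kappa\equiv0$) under the assumed curvature bound. One small remark: the reference to \eqref{geoflow-2} in the corollary statement appears to be a mislabeling in the paper---the surrounding text and the appearance of $\Hess_t(\phi_t)$ in the hypothesis indicate the intended system is the modified flow \eqref{modified-geoflow-1}, which is the reading you (correctly) adopted.
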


% \proof[Acknowledgments]This work has been supported by the Fonds
% National de la Recherche Luxembourg (FNR) under the OPEN scheme
% (project GEOMREV O14/7628746).

\bibliographystyle{amsplain}%

\bibliography{references}

\end{document}